\theoremstyle{plain}
 \newtheorem{theorem}{Theorem}[section]
\theoremstyle{definition}
\newtheorem{definition}[theorem]{Definition}
\newtheorem{lemma}[theorem]{Lemma}
\newcommand{\cay}{\operatorname{Cay}}
\newtheorem{claim}[theorem]{Claim}
  \newtheorem*{claim*}{Claim}
\newtheorem{example}[theorem]{Example}
\newtheorem{corollary}[theorem]{Corollary}
 \newtheorem{remark}[theorem]{Remark}
\newtheorem{proposition}[theorem]{Proposition}
\newcommand{\Pone}{\text{Cone}}
\tikzset{vertex/.style={circle, draw, fill=black!50},inner sep=0pt, minimum width=4pt}
\theoremstyle{definition}
\thanks {The first author was partially supported by NSF grant DMS-1607616. The second author is supported by NSF grant DMS-1812021.}
\begin{document}

\title{Regular Languages for Contracting Geodesics}
\author{Joshua Eike, Abdul Zalloum}

\begin{abstract}
Let $G$ be a finitely generated group. We show that for any finite symmetric generating set $A$, the language consisting of all geodesics in Cay$(G,A)$ with the contracting property is a regular language. An immediate consequence is that the existence of an infinite contracting geodesic in a Cayley graph of a finitely generated group implies the existence of a contracting element. In particular, torsion groups can't contain an infinite contracting geodesic. As an application, this implies that any finitely generated group containing an infinite contracting geodesic must be either virtually $\mathbb{Z}$ or acylindrically hyperbolic.
\end{abstract}

\maketitle

\section{Introduction}

The study of Gromov hyperbolic groups has been so fruitful that extending tools from this setting to more general classes of groups is a central theme in geometric group theory.

Among the fundamental tools is the study of the geodesic language in hyperbolic groups. A classic result by Cannon \cite{Cannon1984} shows that for any finitely generated hyperbolic group $G$, the language consisting of geodesic words in Cay$(G,S)$ is a regular language regardless of the chosen generating set $S$. A regular language is simply a (typically infinite) set of words of low enough complexity that it can be produced by a finite graph. The existence of such a language has beautiful geometric, algebraic, analytical, and combinatorial consequences. For instance, a geometric reflection of the existence of a regular language for geodesics in the group is the finiteness of cone types in hyperbolic groups. This in turns implies the algebraic fact that the word problem is solvable \cite{Word_Processing}.

The existence of a regular language also has the analytical/combinatorial consequence that the growth function of the group is a linear recursive function. A beautiful, mind boggling example was given by Cannon of a finitely presented group where the language of all geodesic words is a regular language with respect to one generating set but not another. This is Example 4.4.2 in \cite{Word_Processing}, and the group is $\mathbb{Z}\rtimes\mathbb{Z}_2 = \langle x,y,z \mid x^2,zyz^{-1}y^{-1},yxz^{-1}x^{-1}\rangle$. Therefore, the regularity of the geodesic language in a finitely presented group $G$ is not an intrinsic property of the group and is sensitive to the chosen generating set. As the example $\mathbb{Z}\rtimes\mathbb{Z}_2$ above shows, regularity of the geodesic language is not independent of the presentation even in virtually abelian groups.

We show that in a finitely generated group $G$, if one restricts their attention to the language consisting of all ``hyperbolic-like" geodesics, one gets a regular language for any generating set. The condition on a geodesic $\alpha$ to be ``hyperbolic-like" is that for any subsegment $\beta \subseteq \alpha$, the projection to $\beta$ of any ball disjoint from $\beta$ has diameter bounded above by $D$. A geodesic satisfying this condition is called \emph{$D$-super-contracting}.

A related well-studied "hyperbolic-like" property is that of a contracting geodesic. A geodesic $\alpha$ is said to be $\emph{D-contracting}$ if projections to $\alpha$ of balls disjoint from it have diameter at most $D$. In the literature, the exact value of $D$ is seldom important. What is interesting is that such a parameter exists and is independent of the length of the geodesic. This means that projections onto such a geodesic are similar to projections in negatively-curved geometry and different from projections in flat geometry. While our notion of $D$-super-contracting is technically stronger than $D$-contracting, the difference is only in the parameterization and completely independent of the length of the geodesic. In fact, we show the following.

\begin{theorem} \label{thm: 2nd intro thm}
A geodesic $\alpha$ in a proper geodesic metric space $X$ is $D$-contracting if and only if it is $D'$-super-contracting where $D$ and $D'$ determine each other.
\end{theorem}

In light of the above theorem, the notions of super-contracting and contracting geodesics are equivalent, up to changing the contraction's parameter. Our main theorem is the following.

\begin{theorem} \label{thm: first intro thm}

Let $G$ be a finitely generated group, and let $A$ be any finite symmetric generating set. Then the language $L_D$ consisting of all $D$-super-contracting geodesic words in Cay$(G,A)$ is a regular language for any $D$.

\end{theorem}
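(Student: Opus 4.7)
The plan is to adapt Cannon's proof for hyperbolic groups, using an analogue of the cone-type argument tailored to the $D$-contracting property. For a prefix $\alpha$ that labels a $D$-contracting geodesic ending at $g \in G$, define its \emph{$D$-contracting cone type} to be the set of words $w$ such that $\alpha w$ also labels a $D$-contracting geodesic. Declare two prefixes equivalent if their cone types agree. Once there are only finitely many cone types, the standard construction yields a finite automaton: states are cone types, transitions on input letter $a$ send the cone type of $\alpha$ to that of $\alpha a$ whenever this is consistent with the cone type, and every reachable state is accepting, since each corresponds to a $D$-contracting geodesic prefix.

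The technical crux is a local finiteness statement: there exists a radius $R = R(D, |A|)$ such that the cone type of $\alpha$ is determined by the labelled isomorphism type of the ball $B(g, R)$ in $Cay(G, A)$, together with the last $R$ letters of $\alpha$ viewed as a path inside this ball. Because such balls admit only boundedly many labelled isomorphism types, this immediately bounds the number of cone types. Two things must be verified for this local determination: (a) whether $\alpha w$ is a geodesic is determined by the bounded local picture, and (b) whether $\alpha w$ remains $D$-contracting is likewise determined. Both rely on a ``bounded past'' principle: the $D$-contracting hypothesis on $\alpha$ forces any shortcut for $\alpha w$, as well as any ball witnessing a new $D$-contracting violation, to interact with $\alpha w$ only within a bounded neighborhood of $g$.

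The main obstacle is proving this bounded past principle with a constant depending only on $D$ and not on the length of $\alpha$. Concretely, one must show that any shortcut path or any violation-witnessing ball, by the contracting property of $\alpha$, fellow-travels the terminal portion of $\alpha$ within a tube of thickness controlled by $D$; equivalently, any ``bad behavior'' of an extension is detected in a bounded neighborhood of $g$. This is reminiscent of the thin-triangle phenomenon in hyperbolic spaces and of the bounded geodesic image property in more general coarse-geometric settings, and producing the required estimates is where the work of the paper will be concentrated. Once the lemma is in hand, assembling the automaton and verifying that it accepts exactly $L_D$ reduces to routine bookkeeping.
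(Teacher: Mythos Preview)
Your high-level strategy---define $D$-contracting cone types, prove there are only finitely many, and assemble an automaton whose states are cone types---is exactly the paper's approach. However, the local data you propose to determine the cone type is not quite right. In a Cayley graph the group acts vertex-transitively, so the labelled isomorphism type of $B(g,R)$ is the \emph{same} for every $g$; it carries no information. What the paper actually records is (i) the \emph{$m$-tail} of $u$, namely the set of $h\in G$ with $|h|\le m$ and $|\bar u h|<|\bar u|$ (which elements of the ball lie closer to the identity than $g$---extrinsic data about the embedding, not intrinsic to the ball), and (ii) the \emph{$m$-local contracting type}, the set of words $w$ with $|w|\le m$ for which $uw$ is a $D$-contracting geodesic. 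These two finite invariants are what determine the cone, and the induction establishing this uses the Morse/fellow-travelling property of contracting geodesics together with the group isometry $g=\bar v\,\bar u^{-1}$ to transport a hypothetical violation from one word to the other.

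There is also a technical subtlety your outline elides. The inductive step for part~(b) requires controlling projections onto \emph{subsegments} of the geodesic, not just onto the whole geodesic; the definition of $D$-contracting does not obviously give this. The paper handles this by introducing an a~priori stronger notion, \emph{$D$-super-contracting} (every subsegment is $D$-contracting), proving the cone-type theorem for that notion via a ``bounded jumps'' lemma comparing projections onto a geodesic and onto a subsegment, and then devoting a separate section to showing that $D$-contracting implies $D'$-super-contracting for some $D'=D'(D)$ (using slimness and a coarse distance-decreasing property of projections). Your ``bounded past principle'' is in the right spirit, but the passage through super-contracting and the bounded-jumps lemma is precisely the missing idea needed to make the violation-transport argument go through.
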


Since a finitely generated group $G=\langle A \rangle$ is hyperbolic if and only if every geodesic in Cay($G,A)$ is $D$-super-contracting for a uniform $D$ (see Remark \ref{rmk: contraction characterization for hyperbolicity}), the above theorem recovers a classic result by Cannon where he shows that for a hyperbolic group $G$, and for a finitely generated set $A$, the language consisting of all geodesics in Cay$(G,A)$ is a regular language. Since the generating function counting the number of words of length $n$ in a regular language is always rational, this opens a host of combinatorial questions. For any finitely generated group, choice of generating set, and parameter $D$, we can ask how many geodesics of length $n$ are $D$-super-contracting. All of these questions can be answered with a rational generating function. An immediate consequence to Theorem \ref{thm: 2nd intro thm} and Theorem \ref{thm: first intro thm} is the following.

\begin{corollary} \label{cor: first cor intro}
Let $G$ be a finitely generated group and let $A$ be a finite symmetric generating set for $G.$ If Cay$(G,A)$ contains an infinite contracting geodesic, then $G$ contains a contracting isometry. In particular, torsion groups do not contain infinite contracting geodesics.
\end{corollary}

As an interesting application to the above theorem, we answer the following question posed by Osin: Does the existence of an infinite contracting geodesic in a finitely generated group imply that the group is acylindrically hyperbolic?

The previous theorem yields a positive answer to this question:

\begin{corollary} \label{cor: 2nd cor intro}
Let $G$ be a finitely generated group with a generating set $A$ such that Cay$(G,A)$ contains an infinite contracting geodesic, then $G$ must be acylindrically hyperbolic.
\end{corollary}

The paper is organized as follows:

In Section \ref{sec: introducing super contraction}, we introduce an a priori stronger notion of a contracting geodesic that we call \emph{super-contracting} and we set up the main tools needed for the proof of the main two theorems.

In Section \ref{sec: main theorem}, we prove Theorem \ref{thm: first intro thm}.

In Section \ref{sec: super contracting is equivalent to contracting}, we prove Theorem \ref{thm: 2nd intro thm}, Corollary \ref{cor: first cor intro} and Corollary \ref{cor: 2nd cor intro}.



\subsection*{Acknowledgement}
The first author would like to thank his advisor Ruth Charney for her steadfast support and guidance. The second author would like to thank his advisers Johanna Mangahas and Ruth Charney for their exceptional support and guidance. He would also like to thank David Cohen and Denis Osin for fruitful discussions. Both authors would like to thank Kim Ruane for helpful conversation. They also would like to thank the referee(s) for their helpful suggestions.

\section{super-contracting geodesics} \label{sec: introducing super contraction}

Throughout this section $X$ denotes a proper geodesic metric space.  Also, if $\alpha$ is a path in $X$, we will generally use $\alpha$ to denote the image of $\alpha$ in the space $X.$

\begin{definition}[projection]\label{projection}
Let $C$ be a closed subset of $X$. We define the \emph{projection} of a point $x$ onto $C$ to be
\[\pi_C(x) = \{p\in C \mid d(x,p) = \min_{y\in C}d(x,y) \}. \]
In general $\pi_C(x)$ may contain more than one point. For $B\subset X$ we define the \emph{projection} to be $\pi_C(B) = \cup_{x\in B} \pi_C(x)$. We write $d(x, C)$ to denote the distance from $x$ to its projection points.
\end{definition}

\begin{definition}[contracting]\label{contracting qgeod}
Let $\alpha$ be a continuous quasi-geodesic (possibly infinite). We say that $\alpha$ is \emph{$D$-contracting} if for any closed metric ball $B$ disjoint from $\alpha$, diam($\pi_{\alpha}(B)) \leq D$. We say it is \emph{contracting} if it is $D$-contracting for some $D$. 

\end{definition}
\begin{definition}[super-contracting] \label{def: super-contracting}
Let $\alpha$ be a continuous quasi-geodesic in $X$. We say that $\alpha$ is \emph{$D$-super-contracting} if every sub-segment of $\alpha$ is $D$-contracting in the above sense. That is, if for any subsegment $\gamma \subseteq \alpha$ and any closed metric ball $B$ disjoint from $\gamma$, $\text{diam}(\pi_\gamma(B))\leq D$. We say $\alpha$ is \emph{super-contracting} if it is $D$-super-contracting for some $D$. It is immediate from the definition of a $D$-super-contracting quasi-geodesic $\alpha$ that a subsegment of $\alpha$ is itself $D$-super-contracting.

\end{definition}
It is clear from the above definitions that if a geodesic is $D$-super-contracting, then it is $D$-contracting. As mentioned in the table of contents, Section \ref{sec: super contracting is equivalent to contracting} is devoted to proving a converse of the above statement. In other words, we prove that for any proper geodesic metric space $X$ and for any $D$-contracting geodesic $\alpha \subseteq X$, there exists $D'$, depending only on $D$, such that $\alpha$ is $D'$-super-contracting. Also, notice that projections on contracting/super-contracting quasi-geodesics are coarsely well defined. More precisely, if $\alpha$ is a $D$-contracting geodesic, then $ \forall x \in X$ we must have $\text{diam}(\pi_\alpha(x))\leq D$.



\begin{definition}[Quasi Isometric embedding] \label{Def:Quasi-Isometry} 
Let $(X , d_X)$ and $(Y , d_Y)$ be metric spaces. For constants $\lambda \geq 1$ and
$\epsilon \geq 0$, we say a map $f: X \to Y$ is a 
$(\lambda, \epsilon)$--\textit{quasi-isometric embedding} if, for all points $x_1, x_2 \in X$
$$
\frac{1}{\lambda} d_X (x_1, x_2) - \epsilon  \leq d_Y \big(f (x_1), f (x_2)\big) 
   \leq \lambda \, d_X (x_1, x_2) + \epsilon.
$$
If, in addition, there exists a constant $C \geq 0$ such that every point in $Y$ lies in the $C$--neighbourhood of the image of 
$f$, then $f$ is called a $(\lambda, \epsilon)$--\emph{quasi-isometry}. When such a map exists, $X$ 
and $Y$ are said to be \textit{quasi-isometric}. 

\end{definition}

\begin{definition}[Quasi-geodesics] \label{Def:Quadi-Geodesic} 
A $(\lambda, \epsilon)$-\emph{quasi-geodesic} is a $(\lambda, \epsilon)$-quasi-isometric 
embedding $\gamma: [a, b] \to X$. A quasi-geodesic is a  $(\lambda, \epsilon)$-\emph{quasi-geodesic} for some $\lambda \geq 1,$ and $\epsilon \geq 0.$
\end{definition}

\begin{definition}[Morse] A quasi-geodesic $\gamma$ in a proper geodesic metric space
is called \emph{$N$-Morse}, where $N$ is a function $N:[1, \infty) \times [0, \infty) \rightarrow [0, \infty)$, if for any
$(\lambda, \epsilon)$-quasi-geodesic $\sigma$ with endpoints on $\gamma$, we have $\sigma \subseteq \mathcal{N}_{N(\lambda,\epsilon)}(\gamma)$. The
function $N(\lambda, \epsilon)$ is called a \emph{Morse gauge}.
\end{definition}

The following is Lemma 3.3 in \cite{Sultan2013}.
\begin{lemma} \label{lem: contracting implies Morse}(Contracting implies Morse)
For any proper geodesic metric space $X,$ and for each $D \geq 0,$ there exists an $N$, depending only on $D$ such that every $D$-contracting geodesic is $N$-Morse. In particular, every $D$-super-contracting geodesic is $N$-Morse.

\end{lemma}

\begin{lemma}\label{lem: Morse is contagious}
Let $M$ be a Morse gauge and let $\alpha$ be an $M$-Morse geodesic in Cay$(G,A)$ starting at the identity. If $\gamma$ is any geodesic in the Cay$(G,A)$ starting at the identity and ending 1 apart from $\alpha$ then $\gamma$ is $N$-Morse where $N$ depends only on $M$. Also, $N\geq M$.
\end{lemma}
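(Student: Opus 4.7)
The plan is to bootstrap the Morse property from $\alpha$ to $\gamma$ by showing $\gamma$ is Hausdorff-close to $\alpha$ and then transporting quasi-geodesic neighborhoods back and forth. The key preliminary observation is that extending $\gamma$ by the single edge $e$ connecting its endpoint to that of $\alpha$ gives a path $\gamma' := \gamma \cdot e$ that is a $(1,2)$-quasi-geodesic with the same endpoints as $\alpha$, so the Morse gauge of $\alpha$ can be applied to $\gamma'$ directly.

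The first step I would carry out is establishing the Hausdorff bound $d_H(\alpha, \gamma) \leq M+1$. The inclusion $\gamma \subseteq \mathcal{N}_M(\alpha)$ is immediate from Morseness of $\alpha$ applied to the $(1,2)$-quasi-geodesic $\gamma'$. For the reverse inclusion $\alpha \subseteq \mathcal{N}_{M+1}(\gamma)$, I would invoke the standard coarse-continuity-of-projection argument: as one traverses $\gamma'$ from one endpoint of $\alpha$ to the other, the closest-point projection to $\alpha$ varies coarsely continuously and so must pass near every point of $\alpha$, putting every point of $\alpha$ within $M$ of $\gamma'$; dropping the single extra edge $e$ then costs at most one additional unit.

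In the second step, given any $(\lambda, \epsilon)$-quasi-geodesic $\sigma$ with endpoints $p_1, p_2 \in \gamma$, I would pick nearest points $q_i \in \alpha$ with $d(p_i, q_i) \leq M$ and concatenate short geodesic tails $[q_i, p_i]$ onto the ends of $\sigma$ to produce $\tilde\sigma$, a quasi-geodesic with endpoints on $\alpha$. Morseness of $\alpha$ applied to $\tilde\sigma$ gives $\sigma \subseteq \tilde\sigma \subseteq \mathcal{N}_M(\alpha)$, and combining with Step 1 yields $\sigma \subseteq \mathcal{N}_{M + (M+1)}(\gamma) = \mathcal{N}_{2M+1}(\gamma)$, which is the desired Morse bound for $\gamma$. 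The main obstacle is purely one of constant-tracking: attaching the short geodesic tails shifts the quasi-geodesic constants of $\sigma$ from $(\lambda, \epsilon)$ to some $(\lambda', \epsilon')$, so the Morse gauge $M$ of $\alpha$ is formally being evaluated at different arguments in different steps, and the real care lies in verifying that these perturbations can be absorbed so that the final Morse gauge collapses to the clean form $N(\lambda, \epsilon) = 2M(\lambda, \epsilon) + 1$ rather than a nested or shifted version of $M$.
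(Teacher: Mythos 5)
The paper offers no argument of its own here: its ``proof'' is a one-line citation to Lemma 2.1 of Cordes, and your proposal is essentially a self-contained reconstruction of the standard argument behind that citation (extend $\gamma$ by the connecting edge to get a $(1,2)$-quasi-geodesic with the same endpoints as $\alpha$, deduce Hausdorff closeness, then transport quasi-geodesics with endpoints on $\gamma$ over to $\alpha$ and back). The outline is sound and does prove what the paper actually needs downstream, namely that $\gamma$ is $N$-Morse for a gauge $N$ depending only on $M$.

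Two of your quantitative claims are not delivered by the argument as written, however. First, the reverse inclusion $\alpha\subseteq\mathcal{N}_{M+1}(\gamma)$ does not follow from coarse continuity of projection with that constant: the standard connectedness argument (comparing projections of $\gamma'(t)$ and $\gamma'(t+1)$ and noting that some point of $\alpha$ must lie between two such projections) yields a bound on the order of $3M(1,2)+1$, not $M(1,2)+1$. Second, and more importantly, the final gauge cannot collapse to $N(\lambda,\epsilon)=2M(\lambda,\epsilon)+1$: appending the geodesic tails $[q_i,p_i]$ of length up to $M(1,2)$ changes $\sigma$ into a $(\lambda,\epsilon')$-quasi-geodesic with $\epsilon'>\epsilon$, so the gauge of $\alpha$ is necessarily evaluated at the shifted argument, and what your argument produces is $N(\lambda,\epsilon)=M(\lambda,\epsilon')+C$ with $C$ depending on $M(1,2)$. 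You flag this as ``the real care'' but leave it unresolved, and it is not resolvable in the clean form stated; the formula $N=2M+1$ in the lemma should be read as shorthand for ``a gauge determined by $M$'' (which is all that Remark \ref{Morse Lemma} and the cone-types argument ever use), not as an identity of gauges your construction establishes.
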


\begin{proof}
The proof of this lemma follows easily from Lemma 2.1 in \cite{Cordes2017}.
\end{proof}

The following is Lemma 2.7 in \cite{Cordes2017}. It basically states that if you have two $N$-Morse geodesics with the same origin that end close to each other then they have to be roughly uniformly close.

\begin{lemma}\label{lem: fellow traveling}
If $\alpha_1, \alpha_2:[0,A] \rightarrow X$ are $N$-Morse geodesics with $\alpha_1(0) = \alpha_2(0)$ and $d(\alpha_1(s), \alpha_2)\leq K$ for some $ s \in [0,A]$ and some $K>0$, then $d(\alpha_1(t), \alpha_2(t))\leq8N(3,0)$ for all $t<s-K-4N(3,0)$.
\end{lemma}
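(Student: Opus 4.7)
The plan is to build a $(3,0)$-quasi-geodesic with both endpoints on $\alpha_2$ that contains $\alpha_1(t)$, then invoke the Morse property of $\alpha_2$ to locate $\alpha_1(t)$ near $\alpha_2$, and finally upgrade ``near $\alpha_2$'' to ``near $\alpha_2(t)$'' using that $\alpha_2$ is a geodesic starting at the common basepoint.

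First I would pick $p\in\alpha_2$ realizing $d(\alpha_1(s),\alpha_2)\leq K$, writing $p=\alpha_2(q)$; two triangle inequalities through $\alpha_1(0)=\alpha_2(0)$ and through $\alpha_1(s)$ force $q\in[s-K,s+K]$. I would then form $\sigma := \alpha_1|_{[0,s]}\ast[\alpha_1(s),p]$, a path of length at most $s+K$ from $\alpha_2(0)$ to $\alpha_2(q)$. The heart of the argument is checking that $\sigma$ is a $(3,0)$-quasi-geodesic: a triangle inequality through $\alpha_1(s)$ shows that two points $\sigma(a),\sigma(b)$ satisfy $d(\sigma(a),\sigma(b))\geq |b-a|-2K$, and combining this with $q\geq s-K$ forces the asserted quasi-isometric lower bound once $s$ is large compared to $K$. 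The Morse hypothesis on $\alpha_2$ then applies to $\sigma$, placing the point $\alpha_1(t)\in\sigma$ under consideration within $N(3,0)$ of some $\alpha_2(t')$.

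To replace $t'$ by $t$, I would anchor triangle inequalities at the common basepoint: since both curves are geodesics emanating from $\alpha_1(0)=\alpha_2(0)$, one has $d(\alpha_2(0),\alpha_1(t))=t$ and $d(\alpha_2(0),\alpha_2(t'))=t'$, so $|t-t'|\leq d(\alpha_1(t),\alpha_2(t'))\leq N(3,0)$; then $d(\alpha_1(t),\alpha_2(t))\leq d(\alpha_1(t),\alpha_2(t'))+|t-t'|$, and collecting contributions loosely yields the claimed bound $8N(3,0)$. The main obstacle I anticipate is verifying that $\sigma$ is a genuine \emph{$(3,0)$}-quasi-geodesic rather than a $(\lambda,\epsilon)$-quasi-geodesic whose additive constant depends on $K$. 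This obstacle dictates the precise shape of the hypothesis $t<s-K-4N(3,0)$: the summand $K$ absorbs the endpoint mismatch $q\in[s-K,s+K]$, while the $4N(3,0)$ keeps $\alpha_1(t)$ far enough from the corner of $\sigma$ at $\alpha_1(s)$ that the pair $(3,0)$ genuinely governs the Morse bound.
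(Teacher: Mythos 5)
The paper does not actually prove this lemma; it cites Lemma 2.7 of Cordes, so your argument has to stand on its own. Your overall strategy --- build a $(3,0)$-quasi-geodesic by concatenating a geodesic with a short connector, apply the Morse property, then match parameters via the common basepoint --- is the right one, and your last step is fine: once $\alpha_1(t)$ is within $N(3,0)$ of some $\alpha_2(t')$, the shared basepoint gives $|t-t'|\leq N(3,0)$ and hence $d(\alpha_1(t),\alpha_2(t))\leq 2N(3,0)\leq 8N(3,0)$.

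The gap is exactly the step you flagged: $\sigma=\alpha_1|_{[0,s]}\ast[\alpha_1(s),p]$ is in general \emph{not} a $(3,0)$-quasi-geodesic, and neither taking $s$ large relative to $K$ nor keeping $t$ away from the corner repairs this, because the quasi-geodesic inequality must hold for \emph{every} pair of points of $\sigma$, including pairs straddling the corner at $\alpha_1(s)$; where $\alpha_1(t)$ sits on $\sigma$ is irrelevant to whether $\sigma$ is a quasi-geodesic. Concretely, for $u=\alpha_1(s-r)$ and $v$ the point of $[\alpha_1(s),p]$ at distance $r'$ from $\alpha_1(s)$, the only available lower bounds are $d(u,v)\geq|r-r'|$ and $d(u,v)\geq r+r'-2K$, so with $r=r'=K/2$ the parameter gap is $K$ while the distance can be $0$. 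This really happens: in a tree where $\alpha_1$ and $\alpha_2$ branch at $\alpha_1(s-K)$, the connector $[\alpha_1(s),p]$ backtracks exactly along $\alpha_1$, so $\sigma$ is only a $(1,2K)$-quasi-geodesic, and applying the Morse gauge at $(1,2K)$ yields the $K$-dependent bound $N(1,2K)$ rather than the asserted $K$-independent one. The fix is to put the corner at the \emph{other} end of the connector: take $p=\alpha_2(q)$ to be a genuine nearest-point projection of $\alpha_1(s)$ onto $\alpha_2$ and form $\alpha_2|_{[0,q]}\ast[p,\alpha_1(s)]$. For $u\in\alpha_2|_{[0,q]}$ at distance $r$ from $p$ and $v\in[p,\alpha_1(s)]$ at distance $r'$ from $p$, the nearest-point property gives $d(u,v)\geq\max(r-r',\,r')\geq\tfrac{1}{3}(r+r')$, which is the standard fact behind the $(3,0)$ appearing in the Morse gauge. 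This path has both endpoints on $\alpha_1$, so you apply the Morse property of $\alpha_1$ (not $\alpha_2$) to conclude $\alpha_2|_{[0,q]}\subseteq\mathcal{N}_{N(3,0)}(\alpha_1)$; since $t<s-K\leq q$, your parameter-matching argument then closes the proof.
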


\begin{lemma} \label{lem: ending close to contracting implies fellow travelling}
 If $\alpha$ is a $D$-super-contracting geodesic in Cay$(G,A)$ and $\beta$ is another geodesic with the same starting point as $\alpha$ and ending at most 1 apart from where $\alpha$ ends, then there is a constant $C\geq1$ only depending on $D$ such that $d(\alpha(t), \beta(t)) \leq C$ for all $t< |\alpha|-1-\frac{C}{2}$.
\end{lemma}

\begin{proof}

A $D$-super-contracting geodesic is $D$-contracting and therefore $M$-Morse by Lemma \ref{lem: contracting implies Morse}. Combining this with Lemmas \ref{lem: Morse is contagious} and \ref{lem: fellow traveling} we get the following. If $\alpha$ is any $D$-super-contracting geodesic in Cay$(G,A)$ starting at the identity, then $\alpha$ has to be $M$-Morse where $M$ depends only on $D$. 
By Lemma \ref{lem: Morse is contagious}, if $\beta$ is any other geodesic in Cay$(G,A)$, starting at the identity and ending 1 apart from where $\alpha$ ends, then $\beta$ has to be $N$-Morse where $N$ depends only on $M$ which depends only on $D$.
Now Lemma \ref{lem: fellow traveling} gives us that $d(\alpha (t), \beta(t)) \leq 8N(3,0)$ for all $t<|\alpha|-1-4N(3,0)$. Thus, the conclusion follows by taking $C=8N(3,0).$

\end{proof}

The following lemma relates projection onto a subsegment of a super-contracting geodesic to projection onto the full geodesic. It shows that if the projection onto the full geodesic is on one side of the subsegment, then projection onto the subsegment can't jump too far toward the other side.

\begin{lemma}[bounded jumps]\label{lem: Gap}
Let $\gamma:[a, b]\to X$ be a $D$-super-contracting geodesic. Let $c\in[a,b]$. Denote by $\alpha$ and $\beta$ the subsegments of $\gamma$ from $\gamma(a)$ to $\gamma(c)$ and from $\gamma(c)$ to $\gamma(b)$ respectively.
For any $x\in X$, if $\pi_\gamma(x) \cap \alpha \neq \emptyset$, then every $\gamma(q)\in\pi_\beta(x)$ satisfies $d(\gamma(c),\gamma(q)) \leq D$. 
\end{lemma}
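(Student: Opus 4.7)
The plan is to exhibit a single closed ball around $x$ that is disjoint from $\beta$ and whose projection onto $\beta$ contains both $y$ and $\gamma(c)$; the $D$-contracting property of $\beta$, which holds because $\beta$ is a subsegment of the $D$-super-contracting $\gamma$, will then force $d(y,\gamma(c)) \leq D$ immediately.

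First I would record the preliminary observation that $\gamma(c) \in \pi_\beta(p)$ for any $p \in \pi_\gamma(x) \cap \alpha$. This is geometrically natural: $p$ sits on $\alpha$, and $\gamma(c)$ is the unique meeting point of $\alpha$ and $\beta$, so traveling along $\gamma$ from $p$ toward any point of $\beta$ must first pass through $\gamma(c)$. For a genuine geodesic $\gamma$ this is simply the triangle equality along $\gamma$; for a continuous quasi-geodesic it requires a short separate argument invoking the super-contracting property on the subsegment from $p$ to any competing point on $\beta$.

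Granted this, I would set $B := \bar B(x,\, d(x,y) - \delta)$ for a small $\delta > 0$. Since $y \in \pi_\beta(x)$, this closed ball is disjoint from $\beta$. I would then split on whether $d(x,p) < d(x,y)$ or $d(x,p) = d(x,y)$, noting that $d(x,p) \leq d(x,y)$ always holds since $y \in \gamma$ and $p \in \pi_\gamma(x)$. In the strict case $d(x,p) < d(x,y)$, I choose $\delta$ small enough that $p \in B$. Then $\pi_\beta(B) \supseteq \pi_\beta(x) \cup \pi_\beta(p) \supseteq \{y,\gamma(c)\}$, and the $D$-contracting property of $\beta$ applied to the disjoint ball $B$ gives $d(y,\gamma(c)) \leq \text{diam}(\pi_\beta(B)) \leq D$.

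In the equality case $d(x,p) = d(x,y)$, the point $y$ itself lies in $\pi_\gamma(x)$, so the coarse well-definedness remark following Definition \ref{def: super-contracting} gives $d(p,y) \leq \text{diam}(\pi_\gamma(x)) < D$; since $\gamma(c)$ lies on $\gamma$ between $p$ and $y$, distance-additivity along $\gamma$ then forces $d(y,\gamma(c)) \leq d(p,y) < D$. The main obstacle I anticipate is the preliminary observation $\gamma(c) \in \pi_\beta(p)$ in the quasi-geodesic setting: for a genuine geodesic this collapses to one line, but for a quasi-geodesic one must rule out a competing closest point on $\beta$ using a suitable ball disjoint from the subsegment of $\gamma$ between $p$ and the competitor.
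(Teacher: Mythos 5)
Your overall strategy---manufacture a single closed ball around $x$, disjoint from $\beta$, whose projection onto $\beta$ captures both $y$ and $\gamma(c)$---is genuinely different from the paper's. The paper studies the continuous function $f(t)=d(x,\gamma(t))$, uses the intermediate value theorem to produce a point $\gamma(q')\in\alpha$ with $f(q')=f(q)$ and $f\ge f(q)$ on $[q',q]$, observes that $\gamma(q')$ and $y=\gamma(q)$ then both lie in $\pi_{\gamma|_{[q',q]}}(x)$, and concludes from the coarse well-definedness of projections of the single point $x$ that $d(\gamma(q'),y)\le D$. That route never needs to know where the projection of an auxiliary point $p\in\pi_\gamma(x)$ lands on $\beta$, which is exactly where your argument is exposed. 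For genuine geodesics your version works and is arguably cleaner; the issue is that the lemma is stated for continuous quasi-geodesics.

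The genuine gap is the ``preliminary observation'' $\gamma(c)\in\pi_\beta(p)$. You correctly flag it as the crux, but the repair you sketch cannot be carried out, and in the stated generality the observation is simply false, because a continuous quasi-geodesic may backtrack across $\gamma(c)$. Concretely, in $X=\mathbb{R}$ the piecewise-linear quasi-geodesic with $\gamma(0)=0$, $\gamma(1)=2$, $\gamma(2)=1$, $\gamma(3)=3$ is $0$-super-contracting (any closed ball disjoint from a closed subinterval of $\mathbb{R}$ projects to a single endpoint), yet for $c=1$ and $p=\gamma(1/2)=1\in\alpha$ one has $\pi_\beta(p)=\{1\}$ while $\gamma(c)=2$. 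Your proposed fix---``invoking the super-contracting property on the subsegment from $p$ to any competing point on $\beta$''---cannot work: $p$ lies on that very subsegment, so there is no nondegenerate ball about $p$ disjoint from it, and in any case the contracting axiom bounds the \emph{diameter} of a projection; it cannot force the projection to contain the prescribed point $\gamma(c)$. The same geodesics-only reasoning recurs in your equality case, where ``distance-additivity along $\gamma$'' fails for quasi-geodesics. If $\gamma$ is an honest geodesic (which is the only situation in which the paper ever applies Lemma \ref{lem: Gap}), both of your steps reduce to the triangle equality along $\gamma$ and your proof closes; but as a proof of the lemma as stated, the argument has a hole precisely at the point you deferred, and the example above shows that the backtracking phenomenon is an intrinsic obstruction rather than a technicality.
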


\begin{proof}Let $x\in X$ be such that there exists a point $\gamma(p)\in\pi_\gamma(x)\cap\alpha\neq\emptyset$. Since $\gamma$ is continuous, the map $f(t)=d(x, \gamma(t))$ is a continuous real-valued function. Let $\gamma(q)$ be in $\pi_\beta(x)$ and let $\gamma(q')$ be the point in $\alpha$ closest to $\gamma(c)$ satisfying $d(x,\gamma(q'))=d(x,\gamma(q))$. That is, let $q' = \sup\{s\leq c \mid f(s) = f(q)\}$. Such a point exists by the intermediate value theorem applied to the inequality $f(p)\leq f(q)\leq f(c)$. Taking the supremum guarantees that no point between $\gamma(q')$ and $\gamma(c)$ is closer to $x$ than $\gamma(q')$ is. Thus, $\gamma(q)$ and $\gamma(q')$ are both in the projection of $x$ onto $\gamma|_{[q',q]}$. Since $\gamma$ is $D$-super-contracting, $d(\gamma(q'),\gamma(q))\leq D$. Therefore $d(\gamma(c),\gamma(q))\leq D$.
\end{proof}


\section{super-contracting language is a regular language in any finitely generated group}\label{sec: main theorem}

\begin{definition}[A language over a finite alphabet]
Let $A$ be a finite set and $A^\star$ be the free monoid over $A$. We call an element $w \in A^\star$ a \emph{word} in $A$. If $w \in A^\star$ and $w = a_1 \cdots a_n$ where each $a_i \in A$, then we call $a_1,\dots,a_n$ the \emph{letters} of the word $w$.  A \emph{language over $A$} is a set of words in $A^\star$.  The \emph{word length} of  $w \in A^\star$ is the number of letters of $A$ in the word $w$. We denote the word length of $w$ by $|w|$.
\end{definition}

The alphabets for the languages we will be working with will be  finite generating sets for groups.

\begin{definition}
Let $G$ be a finitely generated group and let $A$ be a finite, symmetric generating set for $G$. The \emph{Cayley graph of $G$ with respect to $A$}, $\cay(G,A)$, is the graph whose vertices are the elements of $G$ and $g,h \in G$ are joined  by an edge if $g^{-1}h \in A$. If $g^{-1}h \in A$, then we label the edge connecting $g$ and $h$ by $g^{-1}h$.  The Cayley graph $\cay(G,A)$ is a metric space by declaring each edge to have length $1$. 
\end{definition}

\begin{definition} (Left actions on Cayley graphs) Let $G$ be a finitely generated group and let $A$ be a finite symmetric generating set. The group $G$ admits a natural left action on $\cay(G,A)$ by isometries as follows. Let $g\in G$ and $x\in \cay(G,A)$. If $x$ is a vertex, then it is also a group element, so we can define $g \cdot x = gx$ where $gx$ is simply the product of $g$ and $x$ in $G$. If $x$ is a point on an edge connecting two vertices $v_1$ and $v_2$ at distance $t\in [0,1]$ from $v_1$, let us denote it $x = (v_1, t, v_2)$. We define the group action on such a point to be $g\cdot x = (gv_1, t, gv_2)$. For a subset $B \subseteq \cay(G,A),$ we define $g\cdot B:=\{g\cdot x\,| x \in B\}$. For a continuous path $\sigma: [a,b] \rightarrow \cay(G,A)$, we define a new path $g \cdot \sigma:[a,b] \rightarrow \cay (G,A)$ by $(g\cdot \sigma)(t)=g\cdot \sigma(t).$ We will sometimes abuse notation and use  $g \cdot \sigma $ to denote the image of the map $g\cdot \sigma.$
 
\end{definition}

When the group $G$ is generated by the finite set $A$, every path in $\cay(G,A)$ produces a word in $A^\star$ by concatenating the labels of the edges in the order they appear along the path. Conversely, every word in $A^\star$ produces a path in $\cay(G,A)$ by starting at the identity $e$ and traversing the edges in $\cay(G,A)$ labeled by the letters of the word appearing from left to right. We will be particularly concerned with words in $A^\star$ that correspond with geodesic paths in $\cay(G,A)$.

\begin{definition}
For a word $w \in A^\star$, let $\overline{w}$ denote the element of $G$ obtained by viewing $w$ as an element of $G$.
A word $w \in A^\star$ is \emph{geodesic} in $\cay(G,A)$ if the path in $\cay(G,A)$ from $e$ to $\overline{w}$ labeled by the letters of $w$ is a geodesic in $\cay(G,A)$. This is equivalent to saying that $|w|$ is minimal among all words that represent $\overline{w}$. Further, for a word $w \in A ^\star,$ we define $|\overline{w}|_G$ to be the minimal word length among all possible words $u \in A^\star$ with $\overline{u}=\overline{w}.$ In notation, we have 

$$|\overline{w}|_G:=\text{min}\{|u|\,\, \text{for all}\,\, u \in A^\star \text{such that}\,\, \overline{u}=\overline{w}\}.$$
\end{definition}

\begin{definition}\label{def: fsa}
A \emph{finite state automaton (FSA)} over an alphabet $A$ is a finite graph whose edges are directed and
labeled by elements of $A$; the vertices of the graph are divided into two sets—“accept”
and “reject”—and there is a distinguished vertex $s_0$ called the initial vertex. The \emph{accepted language} of the automaton is the set
of words which occur as labels on a directed edge path beginning at $s_0$ and ending
at an accept vertex. Those vertices are often called the \emph{states} of the automaton.
\end{definition}

\begin{definition}\label{def: regular language} (Regular languages)
Let $A$ be a finite set and $A^*$ be the set of all words with letters in $A$. Recall that a language over $A$ is a subset $L\subset A^*$. A language over $A$ is \emph{regular} if it is the accepted language of some finite state automaton over $A$.
\end{definition}

\begin{definition}
Let $u \in A^\star$ be a geodesic word in the Cay($G,A)$. We define the \emph{$D$-super-contracting cone} of $u$, denoted by $\Pone^{D}(u)$, to be all $w \in A^\star$ so that the concatenation $uw$ is a $D$-super-contracting geodesic in Cay$(G,A)$. If $u$ and $v$ have the same $D$-super-contracting cone, we will say that $u$ and $v$ have the same \emph{cone type}.
\end{definition}

\begin{example}\label{example: D-cone}

In the free group on two letters $F_2=\langle a,b \rangle $, if we take $D=1$, the $D$-super-contracting cone of $a$ is all geodesic words in the group that don't start with $a^{-1}$ whereas in $\mathbb{Z}\oplus \mathbb{Z}=\langle a,b|[a,b] \rangle$, if $D=1$, the $D$-super-contracting cone of $a$ is empty. If we take $D=2,$ then in the free group example, the $D$-super-contracting cone of $a$ will still be all geodesic words in the group that don't start with $a^{-1}$ whereas in the $\mathbb{Z}\oplus \mathbb{Z}$ example the $D$-super-contracting cone of $a$ is now the set $\{a,b,b^{-1}\}.$
\end{example}

\begin{definition}\label{def: Cay k-tail}
Given an element $u$ in $A^\star$ representing a geodesic in Cay$(G,A)$, let $\overline{u}$ be the unique group element represented by the word $u$. Given $k \in \mathbb{N}$, we define the \emph{$k$-tail} of $u$ to be all elements $h \in G$ with $|h|_G\leq k$ such that $|\overline{u}h|_G<|\overline{u}|_G$. We denote the $k$-tail of $u$ by $T_k(u)$.
\end{definition}

The $k$-tail of a geodesic word $u$ is all group elements in a ball of radius $k$ of the identity that move $\overline{u}$ closer to the identity in the Cayley graph.

\begin{definition}\label{def: Cay contracting type} Given $u \in A^\star$ representing a geodesic in the Cay$(G,A)$, $m \in \mathbb{N}$, and $D\geq0$, we define the \emph{$m$-local $D$-contracting type} of $u$ to be all words $w \in A^\star$ with $|w|\leq m$ such that the concatenation $uw$ is a $D$-super-contracting geodesic in Cay$(G,A)$. We will denote the $m$-local $D$-contracting type of a geodesic word $u$ by $\Pone^D_m(u)$.
\end{definition}

The following lemma is a slight modification of the Lemma \ref{lem: Gap}. It will be used in the proof of Theorem \ref{General Result}.

\begin{lemma} \label{lemma: bounded jumps even wthout full contraction}
Let $G$ be a group with a finite symmetric generating set $A$. Let $u$ and $w$ be words in $A$ and $a \in A$. Suppose that $uwa$ is a geodesic word such that $|w| \geq 3D+1$ and $uw$ is $D$-super-contracting. For any vertex $y \in $Cay$(G,A)$, if $\pi_{uwa}(y) \cap u \neq \emptyset,$ then $d(\overline{u},p) \leq D$ for any $p \in \pi_{wa}(y)$.
\end{lemma}

\begin{proof} Let $X=$Cay$(G,A)$, notice that since $X$ is a graph, for any geodesic word $v$ in $X$ and any vertex $x \in X,$ we have $d(x, \pi_v(x)) \in \mathbb{N} \cup \{0\}.$ In other words, there exists at least one vertex $x' \in v$ such that $d(x,x')=d(x,\pi_v(x)).$ That is to say, as $X$ is a graph, no projection of $x$ to $v$ can lie on an interior of an edge in $v.$ 


 \begin{claim} \label{claim: comparing projections} Let $uwa$ and $y$ be as in the statement of the lemma, we have the following.
 
 \begin{enumerate}
     \item  All projections of $y$ to $uwa$ live on the subsegment $uw.$ That is, $\pi_{uwa}(y)=\pi_{uw}(y).$

     \item All projections of $y$ to $wa$ live on the subsegment $w.$ That is, $\pi_{wa}(y)=\pi_{w}(y).$
 \end{enumerate}

\end{claim}

Before proving the above claims, we show how they imply the statement of the lemma. Notice that by the assumption of the lemma, there exists a point $p_y \in \pi_{uwa}(y) \cap u.$ As $uw$ is a subsegment of $uwa,$ we have $p_y \in \pi_{uw}(y) \cap u.$ Since $uw$ is $D$-super-contracting, using Lemma \ref{lem: Gap}, all projections of $y$ to $w$ must be within $D$ of $\overline{u}.$ That is to say, for every $p \in \pi_{w}(y),$ we have $d(p,\overline{u}) \leq D$. However, using part (2) of Claim \ref{claim: comparing projections}, we have $\pi_{wa}(y)=\pi_{w}(y)$ which finishes the proof.

\vspace{2mm}

\emph{Proof of }(1):
Let $p_y \in \pi_{uwa}(y) \cap u$ and let $d=d(y, p_y)=d(y, \pi_{uwa}(y)).$ In order to prove the claim, it suffices to show that $d(y, \overline{uwa}) > d.$ Suppose not, that is, suppose $d(y,\overline{uwa)} \leq d$. As $d=d(y, \pi_{uwa}(y)),$ we have $d(y,\overline{uwa)} = d.$ See Figure \ref{fig: bounded jump without full contraction}. Notice that $ d \leq d(y,\overline{uw}) \leq d+1$. However, $d(y,\overline{uw}) \neq d$ as this would imply diam$(\pi_{uw}(y)) \geq 3D+1$ contradicting the assumption that $uw$ is $D$-super-contracting. Hence, we have $d(y,\overline{uw})= d+1$. Now, notice that since $uw$ is a subsegment of $uwa,$ the point $p_y$ must also live in $\pi_{uw}(y).$ Let $p_y'$ be the unique point in $\pi_{uw}(y)$ so that $d(e,p_y')=\underset{p \in \pi_{uw}(y)}{\text{max}} d(e,p)$. We claim that $p_y'$ lives on the $w$ subsegment of $uw$. Suppose not. That is, suppose $p_y' \in uw$ and $p_y' \notin w$. This implies that $d(y, p_y')=d(y, \pi_{uw}(y)) \geq d$ and thus every point in $w$ is at least $d+1$ away from $y.$ Consequently, we have $\overline{uw} \in \pi_w(y)$ contradicting Lemma \ref{lem: Gap} since $|w| \geq 3D+1$. Hence, we have $p_y' \in w.$ Let $q$ be the unique vertex on $uw$ so that $d(e,q)=d(e,p_y')+1$. Notice that since $uw$ is $D$-super-contracting, we have $d(q,p_y) \leq d(q,p_y')+d(p_y',p_y) \leq 1+D.$ Furthermore, by our choice of $q$, we have $d(y,q)=d+1$ and $d(y,q') \geq d+1$ for any vertex $q'$ on the geodesic $uw$ with $d(e,q') \geq d(e,q).$ Now, let $\alpha=[q,\overline{uw}]_{uw}$ denote the subsegment of $uw$ starting at the vertex $q$ and ending at the vertex $\overline{uw}$. The points $q,\overline{uw}$ both live in $\pi_\alpha (y)$ as $d(y,q)=d(y,\overline{uw})=d+1$ where $d(y,z) \geq d+1$ for any vertex $z \in \alpha$. On the other hand, $d(q,\overline{u})=1+d(p_y',\overline{u}) \leq 1+d(p_y',p_y) \leq 1+D$. This yields that $d(q,\overline{uw})=d(\overline{u},\overline{uw})-d(q,\overline{u}) \geq (3D+1)-(D+1)=2D,$ which contradicts the fact that $uw$ is $D$-super-contracting. Therefore, the projection of $y$ to $uwa$ is contained entirely in the subsegment $uw.$ Hence $\pi_{uw}(y)=\pi_{uwa}(y).$ \\

\begin{figure}
    \centering

\begin{tikzpicture}

\draw[ very thick,black] (0,0) -- ++(3,0) node[below] {};

\node[right] at (2.9,-.2) {$\overline{u}$};

\draw[ very thick,black] (6.5,0) -- ++(.4,.4);

\node[right] at (6.7,.1) {$a$};

\draw[thick,fill=black] (2,3.5) circle (0.05cm);

\node[above] at (2,3.6) {$y$};

\draw[dashed] (2,3.5) -- (2,0);

\draw[thick,fill=black] (2,0) circle (0.03cm);

\node[below] at (2.1,-.1) {$p_y$};

\node[below] at (3.8,-.1) {$p'_y$};

\draw[thick,fill=black] (3.8,0) circle (0.03cm);

\node[left] at (1.9,1.7) {$d$};

\draw[dashed] (2,3.5) -- (6.9,.4);

\node[above] at (5,1.7) {$d$};

\draw[thick,fill=black] (3,0) circle (0.05cm);
\draw[thick,fill=black] (6.5,0) circle (0.05cm);

\draw[thick,fill=black] (6.9,.4) circle (0.05cm);

\node[above] at (6.9,.52) {$\overline{uwa}$};

\node[below] at (1.4,-.1) {$u$};

\draw[ very thick,red] (3,0) -- ++(3.5,0) node[below]{};

\node[below] at (6.3,-.1) {$\overline{uw}$};

\node[below] at (4.7,-.1) {$w$};

\draw[thick,fill=black] (0,0) circle (0.05cm);
\draw[thick,fill=black] (0,0) circle (0.05cm);

\end{tikzpicture}
\caption{Possible projections of $y$ to the geodesic $uwa$ where $uwa$ is $D$-super-contracting and $|w| \geq 3D+1$ .}
    \label{fig: bounded jump without full contraction}

\end{figure}

\emph{Proof of }(2): Let $p_y \in \pi_{uwa}(y) \cap u$ and let $d=d(y, p_y)=d(y, \pi_{uwa}(y)).$ Using part (1), we have $d(y,\overline{uwa})>d.$ Notice that if $\pi_{uwa}(y) \cap w \neq \emptyset,$ then we are done as if $z \in \pi_{uwa}(y) \cap w,$ then $d(y,z)=d$ while $d(y,\overline{uwa})>d.$ Hence $\pi_w(y)=\pi_{wa}(y).$ 

Otherwise, if $\pi_{uwa}(y) \cap w = \emptyset,$ then all projections of $y$ to $uwa$ live in the $u$ subsegment of $uwa.$ In particular, since $uw$ is a subword of $uwa,$ all projections of $y$ to $uw$ live on the $u$ subsegment of $uw.$ Let $p''$ be the point with $d(\overline{u},p'')=\underset{p \in \pi_{w}(y)}{\text{max}} d(\overline{u},p)$. Notice that by Lemma \ref{lem: Gap}, every projection of $y$ to $w$ is at most $D$ away from $\overline{u}$, and since $|w| \geq 3D+1$, we have $p'' \neq  \overline {uw}.$ Let $q''$ be the point on $w$ with $d(\overline{u},q'')=d(\overline{u},p'')+1$. By Lemma \ref{lem: Gap}, since $uw$ is $D$-super-contracting, we have $d(\overline{u},p'') \leq D$ and hence $d(\overline{u},q'') \leq D+1.$ Notice that by our choice of $p''$ if we let $d'=d(y,p'')=d(y, \pi_w(y))$, then $d(y,q'')=d'+1$. In order to finish the proof of this claim, we need to show that $d(y,\overline{uwa}) \neq d'.$ Suppose for the sake of contradiction that $d(y,\overline{uwa})=d',$ this implies that $d(y,\overline{uw})$ is either $d'$ or $d'+1$. However, it can't be $d'$ as this would contradict the assumption that $uw$ is $D$-super-contracting. Thus $d(y, \overline{uw})=d'+1.$ But that implies that both $q''$ and $\overline{uw}$ live on the projection of $y$ to the subsegment of $uw$ connecting $q''$ to $\overline{uw}$ given by $\beta=[q'',\overline{uw}]_{uw}$. This contradicts the assumption that $uw$ is $D$-super-contracting as diam$(\pi_\beta(y))=d( \overline{uw}, q'')=d(\overline{uw},\overline{u})-d(\overline{u},q'') \geq (3D+1)-(D+1)=2D.$



\end{proof}

Let $G$ be a group with a finite symmetric generating set $A$. The goal of this section is to show that the languages $L_D$ consisting of all $D$-super-contracting geodesics are all regular languages regardless of the chosen generating set $A$. Before doing so, we will state an important key theorem that will provide us with the states needed for our FSA, we will refer to this theorem by ``the cone types theorem''. The theorem states that in order to determine $D$-super-contracting cone type of a geodesic word $u$, you need only to understand the local geometry around the vertex $\overline{u}$. To be more precise, it says that there exist a uniform $m$, depending only on $D$, such that the $m$-neighborhood around a vertex $\overline{u}$ encodes the information needed to determine what elements are in $\Pone^{D}(u)$. This will imply that we have only finitely many cone types because there are only finitely many types of $m$-neighborhoods in Cay$(G,A).$

\begin{theorem}\label{General Result}
For a given constant $D \geq 0,$ there exists an integer $m$, depending only on $D$, such that if $u,v$ are two geodesic words in Cay$(G,A)$ with $T_m(u)=T_m(v)$ and $\Pone^{D}_m(u)=\Pone^{D}_m(v)$, then $\Pone^{D}(u)=\Pone^{D}(v).$ In particular, there are only finitely many such cones.
\end{theorem}

We will prove this theorem, but for now, let us show how it implies our first main theorem about the existence of a regular language for all $D$-super-contracting geodesics:

\begin{theorem}\label{thm: regular language general}
Let $G$ be a group and $A$ any finite symmetric generating set. Let $L_D$ be the language of words in the alphabet $A$ which, when interpreted as paths in the Cayley graph, are $D$-super-contracting geodesics. Then for any fixed $D$, the language $L_D$ is a regular language.
\end{theorem}

\begin{proof}
Consider the finite graph $\Gamma$ whose vertices are the $D$-contracting cone types of $G$ and which has a directed edge labeled $a\in A\cup A^{-1}$ connecting the $D$-super-contracting cone type of a geodesic word $u$ to the $D$-super-contracting cone type of $ua$ if and
only if $a$ belongs to the $D$-super-contracting cone of $u$. Otherwise the edge labeled $a$ goes to the unique fail state. All non-empty $D$-super-contracting cone types are accept states. The initial state is the cone type of the empty word. The previous theorem shows that there are finitely many vertices, and the $D$-super-contracting cone type was defined precisely to pick out those continuations that are both geodesic and $D$-super-contracting. Also, observe that if $\Pone^D(u)=\Pone^D(v)$, then we have $\Pone^D(ua)=\Pone^D(va)$ for any $a \in A.$ To see this, first, notice that since $\Pone^D(u)=\Pone^D(v)$, the word $ua$ is a $D$-super-contracting geodesic if and only if $va$ is. Further, for any word $w,$ if we let $w'=aw$, we have $w' \in \Pone^D(u)$ if and only if $w' \in \Pone^D(v).$ This implies that $uw'=uaw$ is a $D$-super-contracting geodesic if and only if $vw'=vaw$ is. Therefore, $\Pone^D(ua)=\Pone^D(va)$ whenever $\Pone^D(u)=\Pone^D(v).$ This shows that it is not possible for a single vertex of the directed graph $\Gamma$ to have two distinct outgoing edges labelled with the same letter $a \in A$. Such a finite state automaton is called \emph{deterministic}.
\end{proof}

Now we prove Theorem \ref{General Result}:

\begin{proof} Fix a super-contracting constant $D$. Using Lemma \ref{lem: ending close to contracting implies fellow travelling}, there exists a constant $C$, which depends only on $D$, such that for any $D$-super-contracting geodesic $\alpha$, if $\beta$ is a geodesic starting at the same point as $\alpha$ and ending at most 1 away from $\alpha$, then $d(\alpha(t), \beta(t)) \leq C$ for all $t< |\alpha|-1-\frac{C}{2}$.

We want to show that there exists some $m$ large enough so that if two geodesic words $u,v \in$Cay$(G,A)$ satisfy $T_m(u)=T_m(v)$ and $\Pone^{D}_m(u)=\Pone^{D}_m(v)$, then $\Pone^{D}(u)=\Pone^{D}(v)$. Recall that for a word $w$ in $A^\star$, the number of letters appearing in $w$ is denoted by $|w|$ while the length of the group element $\overline{w}$ is denoted by $|\overline{w}|_G.$

Let $\overline{u},\overline{v}$ be the unique group elements represented by $u$ and $v$ respectively. Note that since $u,v$ are assumed to be geodesic words, we must have $|\overline{u}|_G=|u|$ and $|\overline{v}|_G=|v|$. Choose  $m>\text{max}\{C+1, 3D+1\}$.

We proceed by induction on the length of the words in the cone. Since $u$ and $v$ satisfy $T_m(u)=T_m(v)$ and $\Pone^{D}_m(u)=\Pone^{D}_m(v)$, if $w \in A^\star$ with $|w|\leq m$, then $w \in \Pone^{D}(u)$ if and only if $w \in \Pone^{D}(v)$. This covers the base cases. For the induction step, let $w \in \Pone^{D}(u) \cap \Pone^{D}(v)$ and consider $wa$ for $a\in A$. We want to show that $wa \in \Pone^{D}(u)$ if and only if $wa \in \Pone^{D}(v).$

Suppose for contradiction that $wa \in \text{Cone}^{D}(v)$ but $wa \notin \text{Cone}^{D}(u)$. The cones agree for words of length at most $m$, so $|wa| \geq m+1$. Since $wa \notin \Pone^{D}(u)$, then by definition, either the word $uwa$ is not a geodesic word or $uwa$ is a geodesic word that is not $D$-super-contracting.

First we show that $uwa$ must be a geodesic word. We remark that this part of the proof closely follows a proof in the cone types section of \cite{BH}. If $uwa$ is not a geodesic word, then there must exist some geodesic word $\ell$ of length strictly less than $|u|+|w|+1$ such that $\overline{\ell}=\overline{uwa}$. Write $\ell$ as a product $\ell_1\ell_2$ such that $|\ell_1|=|u|-1=|\overline{u}|_G-1$ and $|\ell_2| \leq |w|+1.$ Note that as $w \in \Pone^{D}(u)$, the word $uw$ is $D$-super-contracting and since $uw$ and $\ell$ end 1 apart from each other, Lemma \ref{lem: ending close to contracting implies fellow travelling} gives us that $d(\overline{\ell}_1,\overline{u})<C+1$. Define $z:=u^{-1}\ell_1$, hence, the group element $\overline{z}=\overline{u^{-1}\ell_1}$ satisfies $|\overline{z}|_G \leq C+1 \leq m$ and $|\overline{uz}|_G <|\overline{u}|_G$ which implies that $\overline{z} \in T_m(u).$ Recall that $T_m(u)=T_m(v)$ by assumption, so $\overline{z} \in T_m (v)$ and $|\overline{vz}|_G<|\overline{v}|_G$. Let $\alpha$ be any geodesic word connecting the identity to the group element $\overline{vz}$, so $|\alpha|<|\overline{v}|_G$. Now consider the concatenation of the geodesic $\alpha$ with the edge path labeled $\ell_2.$ On one hand, you get $\overline{\alpha \ell_2}=\overline{vz\ell_2}=\overline{vu^{-1}\ell_1\ell_2}=\overline{vu^{-1}uwa}=\overline{vwa}.$ Therefore, the edge path $\alpha \ell_2$ ends at the same vertex as the geodesic word $vwa$. Consequently, since $vwa$ is a geodesic, we have $|\alpha \ell_2| \geq |v|+|w|+1.$ But on the other hand, if we concatenate the geodesic word $\alpha$ with the edge path labeled $\ell_2$ we get $|\alpha \ell_2| \leq |\alpha|+|\ell_2| < |v|+|w|+1$ which is a contradiction. Therefore, $uwa$ must be a geodesic word.

\begin{figure}
    \centering

    \begin{tikzpicture}[scale=.3]
        
        \coordinate (id) at (0,0);
        \coordinate (u) at (-6,5);
        \coordinate (v) at (8, 3);
        \coordinate (w) at (-1,10);
        \coordinate (a) at (1,1);
        \coordinate (uw) at (-7,15);
        \coordinate (vw) at (7,13);
        \coordinate (uwa) at (-6,14);
        \coordinate (vwa) at (8,14);
        \coordinate (uz) at (-5,3);
        \coordinate (vz) at (7,2);

        \draw (id) node[vertex,label={[shift={(0,-.5)}]$id$}]{};
        \draw (u) node[vertex,label={[shift={(-0.2,0)}]$u$}]{};
        \draw (v) node[vertex,label={[shift={(.2,0)}]$v$}]{};
        \draw (uw) node[vertex, label={[shift={(0,.1)}]$uw$}]{};
        \draw (uwa) node[vertex, label={[shift={(.45,-.1)}]$uwa$}]{};
        \draw (vw) node[vertex, label={[shift={(-.3,0)}]$vw$}]{};
        \draw (vwa) node[vertex, label={[shift={(0,0.1)}]$vwa$}]{};
        \draw (uz) node[vertex, label={[shift={(-.3,-.3)}]$uz$}]{};
        \draw (vz) node[vertex, label={[shift={(0,-.4)}]$vz$}]{};
        \draw (-5,9) node[label={\color{blue}$\ell_2$}]{};
        \draw (8.6,9) node[label={\color{blue}$\ell_2$}]{};
        \draw (4.1,-.25) node[label={$\alpha$}]{};
        \draw (-3,-.1) node[label={$\ell_1$}]{};

        \draw (id)--(u);
        \draw (u)--(uw);
        \draw (uw)--(uwa);
        \draw (id)--(v);
        \draw (v)--(vw);
        \draw (vw)--(vwa);
        \draw (id) parabola (uz);
        \draw[very thick, color=blue] (uz) parabola (uwa);
        \draw[very thick, color=blue] (vz) parabola (vwa);
        \draw (id) parabola (vz);

        \draw (u) circle (3cm);
        \draw (v) circle (3cm);

    \end{tikzpicture}
     \caption{Geodesic cone is determined locally.}
    \label{fig: geodesic cone}
\end{figure}

Now we consider the other possibility, i.e. if $uwa$ is a geodesic word that is not $D$-super-contracting. Denote this geodesic by $\beta_1$, its geodesic subsegments labeled $wa$, $u$ by $\sigma_1$, $\sigma'_1$ respectively. Similarly, denote the geodesic $vwa$ by $\beta_2$, its geodesic subsegments labeled $wa$, $v$ by $\sigma_2$, $\sigma'_2$ respectively. Notice that if we let $g:=\overline{vu^{-1}}$, then we have $\sigma_2=g \cdot \sigma_1.$ So the assumption is that $\beta_1$ is not a $D$-super-contracting geodesic but $\beta_2$ is. This implies the existence of a subsegment $\gamma$ of $\beta_1$ that is not $D$-contracting. In other words, there exists a ball $B$ disjoint from $\gamma$, points $x,y \in B$,  and projection points $p_x \in \pi_{\gamma}(x),\, p_y \in \pi_{\gamma}
(y)$ such that $d(p_x,p_y)>D.$ 

Note that since the geodesic $uw$ is assumed to be $D$-super-contracting, then at least one of the points $p_x$ and $p_y$, say $p_x$, is on the edge labeled $a$ at the end of the geodesic $\beta_1.$ Moreover, we have $p_x \in \pi_{\sigma_1}(x).$

The first thing to observe is that, since $g$ is an isometry taking $\sigma_1$ to $\sigma_2$, $g \cdot p_x \in \pi_{\sigma_2}(g \cdot x)$ which is at the edge labeled $a$ at the end of $\sigma_2$.

\begin{figure}
    \centering

    \begin{tikzpicture}[scale=.3]
        
        \coordinate (id) at (0,0);
        \coordinate (u) at (-6,5);
        \coordinate (v) at (8, 3);
        \coordinate (w) at (-1,10);
        \coordinate (a) at (1,1);
        \coordinate (uw) at (-7,15);
        \coordinate (vw) at (7,13);
        \coordinate (uwa) at (-6,16);
        \coordinate (vwa) at (8,14);
        \coordinate (x) at (0, 15);
        \coordinate (y) at (0, 9);
        \coordinate (px) at (-6.5,15.5);
        \coordinate (py) at (-6.4,9);
        \coordinate (gx) at (14, 13);
        \coordinate (gy) at (14, 7);
        \coordinate (gpx) at (7.5, 13.5);
        \coordinate (gpy) at (7.6, 7);

        \draw (id) node[vertex,label={[shift={(0,-.5)}]$id$}]{};
        \draw (u) node[vertex,label={[shift={(-0.2,0)}]$u$}]{};
        \draw (v) node[vertex,label={[shift={(.2,0)}]$v$}]{};
        \draw (uw) node[vertex, label={[shift={(-0.4,-0.2)}]$uw$}]{};
        \draw (uwa) node[vertex, label={[shift={(.2,.1)}]$uwa$}]{};
        \draw (vw) node[vertex, label={[shift={(-0.35,-0.2)}]$vw$}]{};
        \draw (vwa) node[vertex, label={[shift={(0,.1)}]$vwa$}]{};
        \draw (x) node[vertex, label={[shift={(0,.05)}]$x$}]{};
        \draw (y) node[vertex, label={$y$}]{};
        \draw (px) node[vertex, label={[shift={(-0.2,0)}]$p_x$}]{};
        \draw (py) node[vertex, label={[shift={(-0.3,-0.2)}]$p_y$}]{};
        \draw (gx) node[vertex, label={$g\cdot x$}]{};
        \draw (gy) node[vertex, label={$g\cdot y$}]{};
        \draw (gpx) node[vertex, label={[shift={(-0.35,-0.05)}]$g\cdot p_x$}]{};
        \draw (gpy) node[vertex, label={[shift={(-0.35,-0.3)}]$g \cdot p_y$}]{};

        \draw (-7.3,10) node[label={\color{blue}$\sigma_1$}]{};
        \draw (8.5,8) node[label={\color{blue}$\sigma_2$}]{};
        \draw (-4.5,1) node[label={\color{red}$\sigma'_1$}]{};  \draw (4.5,.2) node[label={\color{red}$\sigma'_2$}]{};

        \draw [very thick, color=red](id)--(u);
        \draw[very thick, color=blue] (u)--(uw);
        \draw[very thick, color=blue] (uw)--(uwa);
        \draw [very thick, color=red](id)--(v);

        \draw[very thick, color=blue] (v)--(vw);
        \draw[very thick, color=blue] (vw)--(vwa);
        \draw[dotted] (x)--(px);
        \draw[dotted] (y)--(py);
        \draw[dotted] (gx)--(gpx);
        \draw[dotted] (gy)--(gpy);

    \end{tikzpicture}
     \caption{Contracting cone is determined locally.}
    \label{fig: contracting cone}
\end{figure}

Now we consider the two different possibilities for projections of $y$ to the subsegment $\gamma$ of $\beta_1$. If all projections of $y$ on the $\gamma$-subsegment of $\beta_1$ are on $\sigma_1$, then by our assumption, $\exists p_y \in \pi_{\sigma_1 \cap \gamma}(y)$ with $d(p_x,p_y)>D$. Let $\gamma'$ denote the subsegment of $\gamma$ which connects $p_x$ to $p_y$. Notice that since the ball $B$ containing $x,y$ is disjoint from $\gamma$, it must also be disjoint from $\gamma'$. By definition of $\gamma'$, we have $\gamma' \subseteq \sigma_1$ and since $g$ is an isometry taking $\sigma_1$ to $\sigma_2$, it must take $\gamma'\subseteq \sigma_1$ to $g \cdot \gamma' \subseteq \sigma_2$. Furthermore, since $p_x \in \pi_{\gamma'}(x), p_y \in \pi_{\gamma'}(y)$, we have

$$g\cdot p_x \in \pi_{g\cdot \gamma'}(g\cdot x),\, g\cdot p_y \in \pi_{g\cdot \gamma'}(g\cdot y),$$
\begin{center}

$g\cdot B \cap g\cdot \gamma' = \emptyset$, and $d(g\cdot p_x,g\cdot p_y)=d(p_x,p_y)>D,$
\end{center}
which contradicts the assumption that $\beta_2$ is $D$-super-contracting. The other possibility is that some projection of $y$ on the $\gamma$-subsegment of $\beta_1$ meets $\sigma'_1$. In other words, assume there exists a point $p \in \pi_\gamma(y) \cap \sigma_1'$. If that is the case, then by removing $\sigma'_1$ from $\beta_1$, using Lemma \ref{lemma: bounded jumps even wthout full contraction}, we get a point $p_y \in \pi_{\sigma_1}(y)$ which is at most $D$ away from $\overline{u}$. This implies that $d(p_x,p_y) \geq  (|\sigma_1|-1)-D= |w|-D \geq m-D >D$. Since $\sigma_1\subseteq \gamma,$ and $B \cap \gamma =\emptyset,$ we have $B\cap \sigma_1=\emptyset$. Again, as $g$ is an isometry with $g \cdot \sigma_1=\sigma_2$, we have 

\begin{center}
    
$g\cdot p_x \in \pi_{\sigma_2}(g\cdot x), \,g\cdot p_y \in \pi_{\sigma_2}(g\cdot y),$
\end{center}

\begin{center}

$g\cdot B \cap g\cdot \sigma_1=g\cdot B\cap \sigma_2=\emptyset, \text{ and }d(g\cdot p_x,g\cdot p_y)=d(p_x,p_y)>D.$
\end{center}
Hence, there exists a ball $B'=g\cdot B$ disjoint from $\sigma_2=g\cdot \sigma_1$ containing two points $g\cdot x,\,g\cdot y$ whose projections to $\sigma_2$ include points $g\cdot p_x,g\cdot p_y$ with $d(g\cdot p_x,g\cdot p_y)>D$. This contradicts the assumption that $\beta_2$ is $D-$super-contracting.

\end{proof}

\begin{definition}[acylindrical action] The action of a group $G$ on a metric space $X$ is called \emph{acylindrical} if for every $\epsilon >0$, there exist  $R,\, N>0$ such that for every two points $x, y \in X$ with $d(x,y)> R$, there are at most $N$ elements $g \in G$ satisfying:

$$d(x,gx)< \epsilon \,\,\, \text{     and   }\,\,\,d(y,gy)<\epsilon.$$
 
\end{definition}

\begin{definition}
 A group $G$ is said to be \emph{acylindrically hyperbolic} if it admits a non-elementary acylindrical action on a hyperbolic space.
\end{definition}

As a consequence of the above theorem, we will show that any finitely generated group which contains an infinite super-contracting geodesic must be acylindrically hyperbolic answering a question posed by Osin, but first, we state an easy corollary of Theorems H and I in \cite{Bestvina2014}:

\begin{corollary}
Let $G$ be a finitely generated group which is not virtually cyclic, and let $A$ be a generating set for $G$. If $G$ contains an element $g$ with a contracting axis in Cay$(G,A),$ then $G$ must be acylindrically hyperbolic.
\end{corollary}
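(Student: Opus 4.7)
The plan is to feed the $G$-orbit of a super-contracting axis of $g$ into the projection-complex machinery of Bestvina--Bromberg--Fujiwara. Fix a bi-infinite super-contracting axis $\gamma$ for $g$ in $Cay(G,A)$ and consider the family $\mathbb{Y} = \{h\gamma \mid h \in G\}$ of $G$-translates, on which $G$ acts naturally. Because each translate $h\gamma$ is $D$-super-contracting with the same constant $D$, the nearest-point projections $\pi_{h\gamma}$ are coarsely well-defined (as noted after Definition \ref{def: super-contracting}), and the diameters $\text{diam}\,\pi_{h_1\gamma}(h_2\gamma)$ between distinct translates are uniformly bounded whenever the two translates are not Hausdorff-close to each other.

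The next step is to verify the full projection axioms of \cite{Bestvina2014} for the family $\mathbb{Y}$. The bounded-diameter axiom is immediate from super-contraction, so the substantive content is the Behrstock-type inequality controlling $\text{diam}\,\pi_{h_1\gamma}(h_2\gamma \cup h_3\gamma)$ in the non-aligned case. This is where the bounded-jumps Lemma \ref{lem: Gap} enters: if the projections of $h_2\gamma$ and $h_3\gamma$ onto $h_1\gamma$ both lie on the same side of a large reference subsegment, Lemma \ref{lem: Gap} forces them to collapse into a $D$-neighborhood of a single endpoint, which is exactly the Behrstock configuration. Combined with the Morse property of super-contracting geodesics (Remark \ref{Morse Lemma}), this gives the projection axioms with constants depending only on $D$.

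With the projection axioms established, Theorems H and I of \cite{Bestvina2014} apply. Their construction produces a hyperbolic quasi-tree of metric spaces $\mathcal{C}(\mathbb{Y})$ on which $G$ acts by isometries, and $g$ acts as a loxodromic isometry because it translates cocompactly along its axis $\gamma \in \mathbb{Y}$. The non-virtually-cyclic hypothesis rules out the elementary case, so the action is non-elementary and $g$ is a WPD loxodromic. By Osin's standard characterization, a group admitting a non-elementary action on a hyperbolic space with at least one WPD loxodromic element is acylindrically hyperbolic, which concludes the argument.

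The main obstacle is the verification of the Behrstock-type axiom described in paragraph two: uniform boundedness of individual projections is immediate, but the three-translate estimate genuinely requires one to chase how $\pi_{h_1\gamma}$ distributes the images of two disjoint super-contracting lines, and this is where Lemma \ref{lem: Gap} must be deployed most carefully. Once this axiom is in hand the remainder is citation, which is why the corollary can honestly be described as ``easy'' given the earlier lemmas and the results of \cite{Bestvina2014}.
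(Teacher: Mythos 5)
Your proposal is correct and follows essentially the same route as the paper, which gives no argument for this corollary beyond citing Theorems H and I of Bestvina--Bromberg--Fujiwara: the translates of the axis form a projection family satisfying their axioms (with the WPD-type hypothesis automatic since the action on the Cayley graph is proper), yielding a non-elementary action on a quasi-tree with $g$ loxodromic WPD, whence acylindrical hyperbolicity by Dahmani--Guirardel--Osin. Your sketch simply unpacks the internal machinery that those cited theorems already encapsulate.
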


Now we show how Theorem \ref{thm: regular language general} along with the previous corollary imply the following:

\begin{corollary}\label{restrictive}
Let $G$ be a finitely generated group which is not virtually cyclic, and let $A$ be a generating set for $G$. If Cay$(G,A)$ contains an infinite super-contracting geodesic, then it must contain a contracting isometry and hence $G$ must be acylindrically hyperbolic. In particular, $G$ can't be a torsion group.
\end{corollary}

\begin{proof}
Let $\gamma$ be an infinite $D$-super-contracting geodesic. Using Theorem \ref{thm: regular language general}, there exists a finite state automaton $M$ that accepts every initial subsegment of $\gamma$. Denote its accepted language by $L_D$. Notice that since $\gamma$ is an infinite $D$-super-contracting geodesic, the language $L_D$ must be infinite. Choose a large enough initial subsegment $w$ of $\gamma$ whose length is larger than the number of states in $M$. This implies the existence of a state that $w$ passes twice. In other words, $w$ has an initial subsegment of the form $uv$ such that $uv^n \in L_D$ for all $n \in \mathbb{N}$ (this is a standard argument in formal languages which is often referred to as the \emph{pumping lemma}). Since subsegments of $D$-super-contracting geodesics are themselves $D$-super-contracting, the subsegment $v^n$ of $uv^n$ is $D$-super-contracting. Hence, $\overline{v}$ is an element whose axis is super-contracting, and therefore, using the previous corollary, the group $G$ must be acylindrically hyperbolic.
\end{proof}

\section{Contracting is also super-Contracting} \label{sec: super contracting is equivalent to contracting}

Throughout this section, let $X$ denote a proper geodesic metric space. The goal of this section is to prove the following.
\begin{theorem} \label{thm: super-contracting is equivalent to contracting }
	Let $\alpha$ be a $D$-contracting geodesic in $X$. There exists a $D'$, depending only on $D$ so that $\alpha$ is $D'$-super-contracting. In particular, a geodesic $\alpha$ is $D$-contracting if and only if it is $D'$-super-contracting where $D$ and $D'$ determine each other.
\end{theorem}

To show this, we will use an additional hyperbolicity property, slimness, which we show in Lemma \ref{lem: slim} holds for contracting geodesics.

\begin{definition}\label{def: slim}
 A geodesic $\alpha$ is \emph{$\delta$-slim} if for any $x\in X$, $p\in\pi_\alpha(x)$, and geodesic $\beta$ from $x$ to a point on $\alpha$, we have $d(p, \beta)\leq\delta$.
\end{definition}

Contracting geodesics are known to also be slim in the setting of CAT(0) spaces. This property also holds in the generality of proper geodesic metric spaces, but the proof has to be adjusted slightly to accomodate the greater generality. In their proof that contracting implies slim, Bestvina and Fujiwara use the assumption that projection onto geodesics is coarsely distance decreasing \cite{BF2009}. This does not always hold for projections in a proper geodesic metric space, but it does hold for projections onto contracting geodesics, as we show in the following proposition.

\begin{proposition}\label{prop: distance decreasing}
    Let $\alpha$ be a $D$-contracting geodesic. For any points $x_1,x_2\in X$ and projection points $p_1\in\pi_\alpha(x_1), p_2\in\pi_\alpha(x_2)$,
    \[d(p_1,p_2) \leq d(x_1,x_2) + 4D. \]
\end{proposition}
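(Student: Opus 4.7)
I will fix a geodesic $\gamma:[0,L]\to X$ from $x_1$ to $x_2$, where $L=d(x_1,x_2)$, and track how the projection $\pi_\alpha(\gamma(t))$ evolves as $t$ increases from $0$ to $L$. The only input is the $D$-contracting property, applied in the form: if a closed ball $\bar{B}(y,r)$ is disjoint from $\alpha$ and contains two points $\gamma(t),\gamma(t')$, then their projections onto $\alpha$ differ by at most $D$. The plan is to chain such balls along $\gamma$ and control the total accumulated drift.

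The argument splits on the behavior of the $1$-Lipschitz function $f(t)=d(\gamma(t),\alpha)$. If $f(t)>D$ for all $t\in[0,L]$, then by continuity and compactness $\min f > D$, and I can subdivide $[0,L]$ into $n=\lceil L/D\rceil$ intervals of length at most $D$. The closed ball $\bar{B}(\gamma(t_i),D)$ is disjoint from $\alpha$ (since $f(t_i)>D$) and contains $\gamma(t_{i+1})$, so chaining the contracting estimate gives $d(p_1,p_2)\leq nD\leq L+D$, well inside the claimed bound.

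If on the other hand $f$ reaches a value $\leq D$ somewhere, I set $t_-=\inf\{t:f(t)\leq D\}$ and $t_+=\sup\{t:f(t)\leq D\}$. On $[0,t_-)$ we still have $f>D$ strictly, so applying the previous chain to $[0,t_--\delta]$ and letting $\delta\to 0$, together with an upper-semicontinuity argument valid in a proper metric space, produces a point $q_-\in\pi_\alpha(\gamma(t_-))$ with $d(p_1,q_-)\leq t_-+D$; because $q_-$ is itself a projection, $d(\gamma(t_-),q_-)=f(t_-)\leq D$ is automatic. Symmetrically from the $x_2$ side I get $q_+\in\pi_\alpha(\gamma(t_+))$ with $d(p_2,q_+)\leq (L-t_+)+D$ and $d(\gamma(t_+),q_+)\leq D$. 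For the middle I use the triangle inequality through $\gamma(t_-)$ and $\gamma(t_+)$: $d(q_-,q_+)\leq D+(t_+-t_-)+D$. Summing the three contributions yields exactly
\[ d(p_1,p_2)\leq (t_-+D)+\bigl((t_+-t_-)+2D\bigr)+\bigl((L-t_+)+D\bigr)=L+4D. \]

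The delicate step I anticipate is producing the boundary projections $q_\pm$ so that a single element of $\pi_\alpha(\gamma(t_\pm))$ simultaneously witnesses both the chain estimate from the far side and the local $D$-estimate to $\gamma(t_\pm)$. Both requirements are just membership in $\pi_\alpha(\gamma(t_\pm))$, so the real content is extracting $q_\pm$ as a subsequential limit of chain-derived projection points, using properness of $X$ and the fact that $\pi_\alpha(\gamma(t_\pm))$ itself has diameter at most $D$ (this diameter bound follows by applying contracting to $\bar{B}(\gamma(t_\pm),f(t_\pm)-\epsilon)$ and letting $\epsilon\to 0$). Once that compactness argument is in hand, the rest is bookkeeping.
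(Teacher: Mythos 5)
Your proof is correct and follows essentially the same route as the paper: subdivide the geodesic $[x_1,x_2]$ into length-$\le D$ pieces on the portion staying farther than $D$ from $\alpha$ to chain the contracting bound, and handle the first and last points within distance $D$ of $\alpha$ by the triangle inequality, giving $d(x_1,x_2)+4D$. The compactness/limit extraction you flag as delicate is actually unnecessary: the contracting property bounds the diameter of the \emph{entire} projection set of each ball, and the last ball in the chain, centered at a point with $f>D$, already contains $\gamma(t_\pm)$, so any choice of $q_\pm\in\pi_\alpha(\gamma(t_\pm))$ works.
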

\begin{proof}
    First we consider the case of two points $x$ and $y$ connected by a geodesic $[x,y]$ such that $d(w, \alpha) > D$ for all $w \in [x,y].$ Subdivide $[x,y]$ into $n=\left\lceil\frac{d(x,y)}{D}\right\rceil$ subsegments $[z_i,z_{i+1}]$ of length $\leq D$ with $z_0=x$ and $z_n=y$. So $nD \leq d(x,y)+D$. Let $z_0'\in \pi_\alpha(x)$ and $z_n'\in\pi_\alpha(y)$ be any pair of projection points. Pick $z_i'\in\pi_\alpha(z_i)$ for the rest. Since $\alpha$ is $D$-contracting, $d(z_i',z_{i+1}')\leq D$ for all $i$. Then \[d(z_0',z_n') \leq \sum_{i=0}^{n-1} d(z_i',z_{i+1}') \leq nD \leq d(x,y)+D. \]
    Now for the general case, fix any geodesic $[x_1,x_2]$. Let $y_1\in[x_1,x_2]$ be the point closest to $x_1$ satisfying $d(y_1,\alpha)\leq D$ and $y_2$ the analogous point closest to $x_2$. If no such points exist, we are in the previous case and already done. Pick any two projection points $q_i\in\pi_\alpha(y_i)$. By the triangle inequality, $d(q_1,q_2)\leq d(y_1,y_2)+2D$. The segments $[x_i,y_i]$ are at distance greater than $D$ from $\alpha$. Therefore we can apply the result of the previous paragraph to get
    \begin{align*}
        d(p_1,p_2) &\leq d(p_1,q_1) + d(q_1,q_2) + d(q_2,p_2) \\
        &\leq d(x_1,y_1)+D + d(y_1,y_2)+2D + d(y_2,x_2)+D \\
        &= d(x_1,x_2)+4D.
    \end{align*}
\end{proof}

\begin{lemma}\label{lem: slim}
	Let $\alpha$ be a $D$-contracting geodesic in a proper geodesic metric space. Then $\alpha$ is also $\delta$-slim where $\delta$ depends only on $D$.
\end{lemma}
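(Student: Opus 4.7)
The plan is to locate a specific point $\beta(t^*)\in\beta$ within $O(D)$ of $p$, by combining the defining contracting property of $\alpha$ with Proposition \ref{prop: distance decreasing}. Without loss of generality I may assume $d(x,p)$ is much larger than $D$: otherwise $x$ itself is within $\delta$ of $p$, and $x$ is an endpoint of $\beta$.

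Parametrize $\beta:[0,L]\to X$ by arc length with $\beta(0)=x$ and $\beta(L)=q\in\alpha$. By the intermediate value theorem, let $t^*$ be the first time at which $d(\beta(t^*),\alpha)=2D$. A triangle inequality gives $t^*=d(\beta(t^*),x)\leq d(\beta(t^*),\alpha)+d(\alpha,x)=2D+d(x,p)$. For a small $\epsilon>0$, the closed ball $\bar{B}:=\bar{B}(x,d(x,p)-\epsilon)$ is disjoint from $\alpha$, so the contracting property gives $\text{diam}(\pi_\alpha(\bar{B}))\leq D$; since $p\in\pi_\alpha(x)\subseteq\pi_\alpha(\bar{B})$, every projection of a point of $\bar{B}$ lies within $D$ of $p$.

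If $t^*\leq d(x,p)-\epsilon$, then $\beta(t^*)\in\bar{B}$, so any $p_{t^*}\in\pi_\alpha(\beta(t^*))$ satisfies $d(p_{t^*},p)\leq D$, and thus $d(\beta(t^*),p)\leq d(\beta(t^*),\alpha)+D=3D$. Otherwise set $t_0:=d(x,p)-\epsilon<t^*$; the point $\beta(t_0)$ lies in $\bar{B}$, so $d(p_{t_0},p)\leq D$ for any $p_{t_0}\in\pi_\alpha(\beta(t_0))$, and Proposition \ref{prop: distance decreasing} applied to $\beta(t_0)$ and $\beta(t^*)$ yields $d(p_{t_0},p_{t^*})\leq(t^*-t_0)+4D\leq 2D+\epsilon+4D$. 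Combining with $d(\beta(t^*),\alpha)=2D$ gives $d(\beta(t^*),p)\leq 2D+D+6D+\epsilon=9D+\epsilon$. In either case, letting $\epsilon\to 0$ yields $d(p,\beta)\leq 9D$, proving the lemma with $\delta=9D$.

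The main obstacle is choosing the intermediate radius ($2D$ here) and the ball radius ($d(x,p)-\epsilon$) carefully so that the case analysis closes and the bound $t^*\leq d(x,p)+2D$ from the triangle inequality feeds cleanly into the $4D$-slack of Proposition \ref{prop: distance decreasing}. The uniform diameter bound $D$ from contracting must then combine with this coarse-Lipschitz bound to yield a single constant $\delta=O(D)$ that is independent of $d(x,p)$, $d(p,q)$, and $L$.
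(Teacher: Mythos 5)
Your reduction is sound in outline: the observation that the closed ball $\bar{B}(x,d(x,p)-\epsilon)$ is disjoint from $\alpha$ and hence has projection of diameter at most $D$ containing $p$ is correct, Case 1 is correct, and the application of Proposition \ref{prop: distance decreasing} is legitimate. The fatal step is the claimed ``triangle inequality'' $t^*=d(\beta(t^*),x)\leq d(\beta(t^*),\alpha)+d(\alpha,x)$. This is not an instance of the triangle inequality: the two distances to the set $\alpha$ are realized at possibly very different points of $\alpha$, and for set-distances the inequality $d(a,b)\leq d(a,S)+d(S,b)$ is simply false in general (take $S$ a long segment with $a$ and $b$ hanging off opposite ends). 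What the triangle inequality actually yields in your setup is $d(\beta(t^*),\alpha)\geq d(x,\alpha)-t^*$, i.e.\ $t^*\geq d(x,p)-2D$ --- the reverse of the bound you need.

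Moreover, the missing inequality $t^*\leq d(x,p)+O(D)$ is not a minor technicality: it is essentially the content of the lemma. A priori the geodesic $\beta$ could remain at distance greater than $2D$ from $\alpha$ for a long stretch of parameter well past $d(x,p)$, with its projection slowly drifting along $\alpha$ toward $q$; ruling this out is exactly where the contracting hypothesis must do real work, and it is what Lemmas 3.5--3.6 of \cite{BF2009}, which the paper invokes, accomplish via a chain-of-balls argument combined with the coarse Lipschitz property of the projection from Proposition \ref{prop: distance decreasing}. As written, your Case 2 assumes the conclusion in disguise. If you could independently establish $t^*\leq d(x,p)+CD$ for a universal constant $C$, the rest of your computation would go through and yield $\delta=O(D)$; but that bound requires its own proof and cannot be obtained from the triangle inequality alone.
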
 
\begin{proof}
	Proposition \ref{prop: distance decreasing} shows that projections onto $D$-contracting geodesics satisfy the distance decreasing axiom of \cite{BF2009} with $C=4D$. The proof of slimness is then the same as in Lemmas 3.5 and 3.6 of \cite{BF2009}. From this proof, $\delta=7D+1$ suffices.
\end{proof}

\begin{remark}\label{rk: moving away from proj}
	Let $\alpha$ be a $D$-contracting (and thus $\delta$-slim by lemma \ref{lem: slim}) geodesic.  Let $x$ be any point in $X$ and $p \in \pi_\alpha(x)$. If $p_t$ is a point on $\alpha$ at distance $t$ away from $p$, and $z$ is a point in some geodesic connecting $x$ to $p_t$ with $d(p,z) \leq \delta$, then $d(x,p_t)=d(x,z)+d(z,p_t) \geq (d(x,p)-\delta)+(t-\delta).$ Therefore, we have
	\[ d(x, p_t) \geq d(x,p) + t - 2\delta. \]
\end{remark}
The following lemma looks very similar to Lemma \ref{lem: Gap}. The main difference is that here we only assume $\gamma$ is $D$-contracting, whereas Lemma \ref{lem: Gap} assumes that $\gamma$ is $D$-super-contracting.

\begin{lemma}\label{lem: cut subsegment}
		Let $\gamma:[a,b]\to X$ be a $D$-contracting geodesic and $\sigma$ any subsegment of $\gamma$. Let $\delta = 7D+1$. Suppose we have a point $x$ and a projection point $q\in\pi_\gamma(x)$ so that $q\notin\sigma$. If $u$ is the endpoint of $\sigma$ closest to $q$, then for any $p\in\pi_\sigma(x)$, $d(p,u)\leq 2\delta$.
\end{lemma}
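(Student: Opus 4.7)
The plan is to exploit slimness of $\gamma$ (Lemma \ref{lem: slim}) by looking at a geodesic from $x$ to the subsegment-projection point $p$, and to extract from it a point close to the full-geodesic projection $q$. Combining this with the minimality of $p$ on $\sigma$ and with the fact that $u$ separates $q$ from $p$ along $\gamma$ should collapse everything into the desired inequality.

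First I would set up the configuration. Since $q \in \pi_\gamma(x)$ lies on $\gamma$ but outside $\sigma$, and $u$ is the endpoint of $\sigma$ nearest to $q$, the four points $q,u,p$ occur in this order along $\gamma$; in particular
\[ d(q,p) = d(q,u) + d(u,p). \]
Next, choose any geodesic $\beta$ from $x$ to $p$. Because $p \in \sigma \subseteq \gamma$ and $\gamma$ is $\delta$-slim, there is a point $z \in \beta$ with $d(z,q) \leq \delta$. Since $z$ lies on a geodesic from $x$ to $p$, we have $d(x,p) = d(x,z) + d(z,p)$, and the triangle inequality gives
\[ d(x,q) + d(q,p) \leq \bigl(d(x,z)+\delta\bigr) + \bigl(d(z,p)+\delta\bigr) = d(x,p) + 2\delta. \]

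The second ingredient is the minimality of $p$. Since $u \in \sigma$ and $p \in \pi_\sigma(x)$, we have $d(x,p) \leq d(x,u) \leq d(x,q) + d(q,u)$. Substituting this bound and the decomposition $d(q,p) = d(q,u) + d(u,p)$ into the inequality above yields
\[ d(x,q) + d(q,u) + d(u,p) \leq d(x,q) + d(q,u) + 2\delta, \]
from which $d(u,p) \leq 2\delta$ follows immediately.

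I do not expect any serious obstacles here; the only subtlety worth stating carefully is the betweenness assertion that $u$ lies on the $\gamma$-subsegment from $q$ to $p$, which is immediate from $q \notin \sigma$ together with the choice of $u$ as the endpoint of $\sigma$ closest to $q$. Everything else is two applications of the triangle inequality sandwiching the extra slack of $2\delta$ coming from slimness.
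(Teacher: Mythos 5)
Your proof is correct and follows essentially the same route as the paper: the paper packages your slimness step (producing the point $z$ on a geodesic from $x$ to $p$ with $d(z,q)\leq\delta$, hence $d(x,q)+d(q,p)\leq d(x,p)+2\delta$) as Remark \ref{rk: moving away from proj}, and then runs the identical chain of inequalities using $d(q,p)=d(q,u)+d(u,p)$ and the minimality $d(x,p)\leq d(x,u)$. No substantive differences.
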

\begin{proof}
	By Lemma \ref{lem: slim}, $\gamma$ is $\delta$-slim. 
	Let $p\in \pi_{\sigma}(x)$. The point $u$ is between $q$ and $p$ on a geodesic, so $d(q,p) = d(p,u) + d(u,q)$, and $d(x,p)\leq d(x,u)$ because $p$ is in the projection of $x$ onto $\sigma$. By the triangle inequality, $d(x,u) \leq d(u,q) + d(x,q)$. Starting from the inequality in Remark \ref{rk: moving away from proj} with $t=d(p,q)$, we have
	\begin{align*}
	    d(x,q) + d(q,p) &\leq d(x,p) + 2\delta \\
	    d(x,q) + d(p,u) + d(u,q) &\leq d(x,u) + 2\delta \\
	    d(x,q) + d(p,u) + d(u,q) &\leq d(u,q) + d(x,q) + 2\delta\\
	    d(p,u) &\leq 2\delta.
	\end{align*}
	\end{proof}

We are now ready to prove that subsegments of a contracting geodesic are also contracting. Our proof is essentially the same as that of Lemma 3.2 in \cite{BF2009}. We include it only to verify that is still valid assuming only that $X$ is a proper geodesic metric space, using Proposition \ref{prop: distance decreasing} in place of their distance decreasing axiom.

\begin{lemma}\label{lem: subsegments are contracting}
    Let $\gamma$ be a $D$-contracting geodesic. There is a constant $D'$, depending only on $D$, so that any subsegment $\sigma$ of $\gamma$ is $D'$-contracting. Consequently, $\gamma$ is $D'$-super-contracting.
\end{lemma}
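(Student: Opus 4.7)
The goal is to show that every subsegment $\sigma$ of a $D$-contracting geodesic $\gamma$ is $D'$-contracting for some $D'=f(D)$. My plan is to follow the Bestvina--Fujiwara approach (Lemma~3.2 of \cite{BF2009}), with Proposition~\ref{prop: distance decreasing} playing the role of their distance-decreasing axiom. Throughout, let $\delta=7D+1$ be the slim constant from Lemma~\ref{lem: slim} and let $u_1,u_2$ denote the endpoints of $\sigma$. The key preliminary observation I would record is a dichotomy for projection points onto $\sigma$: for any $x\in X$, $p\in\pi_\sigma(x)$, and $q\in\pi_\gamma(x)$, either $q\in\sigma$, in which case comparing $d(x,p)$ and $d(x,q)$ (using that $p$ is closest to $x$ on $\sigma\ni q$ and $q$ is closest to $x$ on $\gamma\supseteq\sigma\ni p$) forces $p\in\pi_\gamma(x)$ and hence $d(p,q)\leq D$; or $q\notin\sigma$, in which case Lemma~\ref{lem: cut subsegment} places $p$ within $2\delta$ of the endpoint of $\sigma$ closest to $q$.

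Next I would fix a closed ball $B$ disjoint from $\sigma$ and pick $x,y\in B$ with $p\in\pi_\sigma(x)$, $q\in\pi_\sigma(y)$; I bound $d(p,q)$ via a case-split on whether $B$ meets $\gamma$. If $B\cap\gamma=\emptyset$, the $D$-contracting property bounds $\pi_\gamma(B)$ to diameter at most $D$. Picking $q_x\in\pi_\gamma(x)$ and $q_y\in\pi_\gamma(y)$ and doing a finite case analysis on the positions of $q_x,q_y$ relative to $\sigma$ (both in $\sigma$; both outside on the same side; both outside on opposite sides; or mixed), then feeding each case into the dichotomy above, yields a bound of the form $d(p,q)\leq 3D+4\delta$; the opposite-sides subcase is handled separately using $d(q_x,q_y)\geq \text{length}(\sigma)$ together with $d(q_x,q_y)\leq D$, which forces $\text{length}(\sigma)\leq D$ and hence $\text{diam}\,\pi_\sigma(B)\leq D$. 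If instead $B$ meets $\gamma$, I pick $z\in B\cap\gamma$, which by disjointness from $\sigma$ sits in one of the tails of $\gamma\setminus\sigma$. Proposition~\ref{prop: distance decreasing} then controls $\pi_\gamma(y)$ for each $y\in B$ in relation to $z$, and the dichotomy places each $p\in\pi_\sigma(y)$ near the endpoint of $\sigma$ adjacent to the tail containing $z$, yielding a uniformly bounded diameter.

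The main obstacle is the subcase where $B$ meets both tails of $\gamma$, at points $z_1\in[a,c)$ and $z_2\in(d,b]$. A naive application of Proposition~\ref{prop: distance decreasing} produces bounds of the form $d(y,z)+4D$, which scale with the radius of $B$, so extracting a purely $D$-dependent bound on $\text{length}(\sigma)$ (and thereby on $\text{diam}\,\pi_\sigma(B)\leq\text{length}(\sigma)$) requires carefully combining the distance-decreasing estimate with the $\delta$-slim property to control how $\gamma$ bends around $B$ when it passes from one tail to the other. Once this estimate is in place and $D'$ is taken as the maximum of the constants arising from all cases, $\sigma$ is $D'$-contracting; since $\sigma$ was an arbitrary subsegment, $\gamma$ is $D'$-super-contracting, as required.
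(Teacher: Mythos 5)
Your setup is sound as far as it goes: the dichotomy for projection points (either $q\in\pi_\gamma(x)$ lands in $\sigma$, forcing $\pi_\sigma(x)\subseteq\pi_\gamma(x)$, or Lemma~\ref{lem: cut subsegment} pins $\pi_\sigma(x)$ within $2\delta$ of the nearer endpoint) is essentially the paper's Case~1, and your finite case analysis when $B\cap\gamma=\emptyset$ does yield a bound of the form $O(D+\delta)$. But the proof has a genuine gap exactly where you flag ``the main obstacle'': in the case where $B$ meets $\gamma$, you never produce the estimate. Saying that one must ``carefully combine the distance-decreasing estimate with the $\delta$-slim property'' and then ``once this estimate is in place\dots'' is not a proof of that estimate, and this case is the entire content of the lemma --- it is precisely where a subsegment could a priori fail to inherit the contracting property, since the hypothesis ``$B$ disjoint from $\gamma$'' is unavailable and, as you correctly observe, Proposition~\ref{prop: distance decreasing} alone only gives bounds scaling with the radius of $B$. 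Note also that the problematic situation is not really ``$B$ meets both tails''; already when $B$ meets a single tail (or merely when the center's projection misses $\sigma$) you have no control over where other points of $B$ project.

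The missing idea in the paper's argument is an interpolation-plus-auxiliary-ball device. Arguing by contradiction, suppose some $x\in B$ has a projection point on $\sigma$ at distance more than $13D+2\delta+3$ from the endpoint $u$ adjacent to the side containing $\pi_\gamma(z)$ (here $z$ is the center of $B$); by Lemma~\ref{lem: cut subsegment} the set $\pi_\gamma(x)$ is then also far from $u$. Subdivide a geodesic $[z,x]$ into pieces of length at most $1$; Proposition~\ref{prop: distance decreasing} says consecutive projections onto $\gamma$ move by at most $1+4D$, so some intermediate point $y$ has $\pi_\gamma(y)$ within $1+4D$ of $u$. Now take the \emph{new} ball $B'$ centered at $y$ of radius $d(y,\gamma)-1$: this ball \emph{is} disjoint from $\gamma$, so $\operatorname{diam}\pi_\gamma(B')\leq D$, and a short computation (using that $y$ lies on $[z,x]$ and that $\pi_\gamma(y)$ is near $u$) shows $B'$ contains a point $w$ with $d(w,x)\leq 4D+2$. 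Chaining $\pi_\gamma(y)\to\pi_\gamma(w)\to\pi_\gamma(x)$ then forces $d(u,\pi_\gamma(x))\leq 13D+3$, the desired contradiction. Without this (or an equivalent) mechanism for converting the radius-dependent bound into a $D$-dependent one, your argument does not close, so as written the proposal does not establish the lemma.
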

\begin{proof}
    Let $\delta=\delta(D)$ be the slimness constant from Lemma \ref{lem: slim}. Let $u$ and $v$ be the endpoints of $\sigma$. In other words, $\sigma=[u,v]$ where $[u,v]$ denotes the the geodesic subsegment of $\gamma$ connecting $u$ to $v$. Let $a,b$ be the first and last points of $\gamma$ respectively.
	
	We claim that $D'=26D+4\delta+6$ does the job. We may assume that $\text{length}(\sigma) > D'$ as otherwise, there would be nothing to prove. Let $B$ be a ball centered at a point $z$ and disjoint from our subsegment $\sigma$. From here we divide the proof into two cases. The constant $D'$ is sufficient for both.

	{\bf Case 1.}
	$\pi_\gamma(z)\cap \sigma \neq\emptyset$. In this case, the ball $B$ is also disjoint from $\gamma$, so we know that the diameter of $\pi_\gamma(B)$ is at most $D$. Let $I=[s_1,s_2]$ denote the smallest interval containing all of the points in $\pi_\gamma(B)$. The length of $I$ is bounded above by $D$. We will argue that every point in $\pi_\sigma(B)$ is within a distance $2\delta$ of $I\cap\sigma$.

	If $x\in B$ has projection $\pi_{\gamma}(x)\cap\sigma\neq \emptyset$, then $\pi_\sigma(x) = \pi_\gamma(x)\cap\sigma$. In particular, $\pi_\sigma(x)\subset I$. Suppose instead $\pi_\gamma(x)$ contains a point $q\notin\sigma$. For concreteness, suppose $u$ is the endpoint of $\sigma$ nearest to $q$ (otherwise apply the same argument with $v$ instead of $u$). Then $u\in I$ and $d(u,p)\leq 2\delta$ for any $p\in\pi_\sigma(x)$ by Lemma \ref{lem: cut subsegment}. So everything in $\pi_\sigma(B)$ is within $2\delta$ of $I$, an interval of length at most $D$. Therefore the diameter of $\pi_\sigma(B)$ is at most $D+4\delta$ for Case 1.

    \begin{figure}
    \centering

    \begin{tikzpicture}
    \draw[thin,-] (-2,0) -- (5,0);

    \draw[thick] (0,1) circle (1.2cm);
    \node[above] at (0,1.1) {$z$};
    \draw[thin, red, -] (1,0) -- (4,0);
    \node[below] at (1,-.1) {$u$};
    \draw[thick,fill=black] (1,0) circle (.02cm);

    \node[below] at (4,-.1) {$v$};
    \draw[thick,fill=black] (4,0) circle (.02cm);

    \node[below] at (-2,-.1) {$a$};
    \draw[thick,fill=black] (-2,0) circle (.02cm);

    \node[below] at (5,-.1) {$b$};
    \draw[thick,fill=black] (5,0) circle (.02cm);

    \node[below] at (2.5,-.1) {$\sigma$};

    \draw[thick,fill=black] (0,1) circle (.02cm);
    \draw[thick,fill=black] (1,1) circle (.02cm);
    \node[above] at (1,1.1) {$x$};

    \draw[thin,-] (0,1) -- (1,1);

     \end{tikzpicture}
     \caption{Case 2 of Lemma \ref{lem: subsegments are contracting} }
    \label{fig: Case 2}
\end{figure}

	{\bf Case 2.} $\pi_\gamma(z)\cap \sigma= \emptyset$. Notice that in this case it is possible that $B$ intersects $\gamma$. Every point $p_{z} \in \pi_{\gamma}(z)$ must belong to precisely one of $[a,u)$, $(v,b]$. In other words, since $\gamma$ is $D$-contracting, it is not possible to have two points $p_z,p_z' \in \pi_{\gamma}(z)$ with $p_z \in [a,u)$ and $p_z' \in (v,b]$. Without loss of generality, suppose that every $p_z \in [a,u).$ We want to show, for all $x \in B$ and $p_x \in \pi_{\sigma}(x)$, that $d(p_x,u) \leq 13D+2\delta + 3$. This would imply $d(p_x,p_y) \leq 26D+4\delta + 6=D'$ for all $x,y \in B$ and $p_x ,p_y \in \pi_{\sigma}(x), \pi_{\sigma}(y)$ respectively. 
	
	To show the above, assume for the sake of contradiction that there exists $x\in B$ so that $\pi_{\sigma}(x)$ contains a point further than $13D+2\delta + 3$ from $u$. Notice that no projection point of $x$ to $\gamma$ can be on $[a,u)$ because Lemma \ref{lem: cut subsegment} would imply $d(\pi_\sigma(x),u) \leq 2 \delta$. So there is also at least one point $p_x\in\pi_\gamma(x)$ with $d(p_x,u) > 13D+2\delta + 3$. Fix $[z,x]$ to be some geodesic connecting $z$ to $x$. By subdividing $[z,x]$ into intervals of length at most one, and using Proposition \ref{prop: distance decreasing}, we can find a point $y \in [z,x]$ such that the set $\pi_{\gamma}(y)$ is contained in the $1+4D$ neighborhood of $u.$ Let $B'$ be the closed ball centered at $y$ with radius $r=d(y,\gamma)-1$ and since $\gamma$ is $D$-contracting, the set $\pi_{\gamma}(B')$ has diameter $\leq D.$ Notice that 
	\[
	d(y,u) \geq d(z,u)-d(z,y) = d(z,u)-d(z,x)+d(y,x) \geq d(y,x).
	\]
	Now, since the set $\pi_{\gamma}(y)$ is contained in the $1+4D$ neighborhood of $u$, we have
	\[d(y,x) \leq d(y,u) \leq d(y,\gamma) + 4D+1 = r+4D+2\]
	Therefore, by the previous inequality, the ball $B'$ must contain a point $w$ with $d(w,x) \leq 4D+2.$ Now notice that for any $p_y \in \pi_{\gamma}(y)$ and $p_w\in\pi_\gamma(w)$, we have
	\[d(p_y,p_x) \leq d(p_y,p_w)+d(p_w,p_x) \leq D+((4D+2)+4D)=9D+2.\]
	On the other hand, we have $d(u,p_y) \leq 1+4D$, which gives that $d(u,p_x) \leq d(u,p_y)+d(p_y,p_x) \leq (1+4D)+(9D+2)=13D+3$, this is a contradiction since $d(u,p_x)>13D+2\delta+3.$

\end{proof}

In light of Theorem \ref{thm: super-contracting is equivalent to contracting }, we can apply Corollary \ref{restrictive} to show the following.
\begin{corollary}
Let $G$ be a finitely generated group which is not virtually cyclic, and let $A$ be a generating set for $G$. If Cay$(G,A)$ contains an infinite contracting geodesic, then $G$ must be acylindrically hyperbolic.
\end{corollary}

\begin{remark}\label{rmk: contraction characterization for hyperbolicity} In the introduction, we stated that a geodesic metric space is hyperbolic if and only if there exists a constant $D$ such that every geodesic is $D$-super-contracting. The forward direction is a standard fact about hyperbolic spaces. For example, the argument given in Theorem 2.14 of \cite{ChSu2014} proving that for a CAT(0) space $X$ every $\delta$-slim geodesic is $D$-contracting (and hence $D'$-super-contracting) still works when the CAT(0) space $X$ is replaced by a hyperbolic space. Conversely, if every geodesic is $D$-super-contracting, then by Lemma \ref{lem: contracting implies Morse}, there exists an $M$ such that every geodesic is $M$-Morse. Now, Lemma 2.2 of \cite{Cordes2017} states that if two edges of a triangle in a geodesic metric space are $M$-Morse, then the triangle is $4M(3,0)-$thin. This implies that the space is $4M(3,0)$-hyperbolic.

\end{remark}

\bibliography{bibliography}{}
\bibliographystyle{plain}

\end{document}